\title{A remark on the single scattering preconditioner applied to boundary integral equations\thanks{The author is partially supported by the French ANR
fundings under the project MicroWave NT09\_460489.}}
\author{B. Thierry \\
Institut Mont\'efiore, Universit\'e de Li\`ege, \\
ACE Team, Sart-Tilman Bldg. B28, \\
B-4000 Li\`ege, Belgium. \\
{\tt B.Thierry@ulg.ac.be}}
\date{}
\begin{document}

\maketitle

\renewcommand{\thefootnote}{\fnsymbol{footnote}}


\renewcommand{\thefootnote}{\arabic{footnote}}

\begin{abstract}
This article deals with boundary integral equation preconditioning for the multiple scattering problem. The focus is put on the single scattering preconditioner, corresponding to the diagonal part of the integral operator, for which two results are proved. Indeed, after applying this geometric preconditioner, it appears that, firstly, every direct integral equations become identical to each other, and secondly, that the indirect integral equation of Brakhage-Werner becomes equal to the direct integral equations, up to a change of basis. These properties imply in particular that the convergence rate of a Krylov subspaces solver will be exactly the same for every preconditioned integral equations. To illustrate this, some numerical simulations are provided at the end of the paper.
\end{abstract}

\section{Introduction}

This paper is devoted to the numerical resolution of acoustic multiple scattering problems in the time-harmonic regime. Here, multiple scattering means that the medium contains more than one obstacle contrary to single scattering where only one scatterer is considered. Multiple scattering arises in many applications and problems such as wave propagation in photonics crystals or trabecular bones modeling. Among all the numerical methods to solve scattering problems, the focus is put here on boundary integral equations. These approaches, based on the integral representation of the scattered field, reduce the initial boundary value problem to an integral equation on the surface of the obstacles, decreasing the dimension by one. However, after discretization, the non-localness of the integral operator leads to a full matrix. Moreover, the mesh of the domain must be sufficiently fine to capture the oscillatory behavior of the wave and hence, at high frequency and/or with a large number of obstacles, the size of the linear system becomes very large. Hence and due to their computational cost, direct solvers cannot be employed and the numerical solution is then handled by a Krylov subspace solver, such as GMRES \cite{SaaSch86}, which can be improved via at least two major changes. First, the CPU cost of the matrix-vector product involved at each iteration can be reduced from $\GrandO{n^{2}}$ to $\GrandO{n\ln(n)}$ thanks to Fast Multipole Method (FMM, see \textit{e.g.} \cite{CoiRokWan93, Dar00, Dar00b}). Secondly, the system can be preconditioned to enhance the convergence rate of the Krylov solver, which is greatly slowed down by the non-positiveness of the Helmholtz operator, especially at high frequency. Finding a robust and efficient preconditioner is non-trivial and is more complicated when the FMM is employed since the matrix of the system is not computed. In particular, only a few number of algebraic preconditioners can be applied, such as SPAI (SParse Approximate Inverse, see \textit{e.g.} \cite{GroHuc97, CarDufGir05}). Another possibility is to work with a well-conditioned integral equation and numerous analytic preconditioning techniques has already been proposed, as for example in \cite{AloBorLev07, AntoineDarbasQJMAM, AntoineDarbasM2AN, DarDarLaf13}.
In multiple scattering context, a natural preconditioner is the one representing single scattering effects. For $M$ obstacles and given the matrix of a discretized integral equation, this preconditioner is composed by the $M$ blocks located on the diagonal of this matrix. As every block represent the scattering problem by one obstacle, this geometric preconditioner is called single scattering preconditioner. 


This article focuses on the effects of this preconditioning on boundary integral equations. They are first studied on direct boundary integral equations, for which the unknown densities are exactly the Cauchy data. After being preconditioned by their single scattering preconditioner, it appears that the three classical direct integral equations become exactly the same! This surprising result is moreover independent of the geometry. Secondly, this preconditioner is applied to the indirect integral equation of Brakhage-Werner, where the unknown densities do not have a physical meaning anymore, and a similar result is obtained. More precisely, it turns out that the preconditioned Brakhage-Werner integral equation 
is similar\footnote{In this paper, two bounded operators $S$ and $T$ acting on a Hilbert space X are called similar if there exists a invertibly bounded operator U on X such that $S = U^{-1}TU$.} to the preconditioned direct integral equations.
As a consequence, every integral equations will share the same spectral properties after being composed by their single scattering preconditioner, and the iterative solver will have the exact same convergence rate. These results are the central point of this article and, as far as the author knows, are new.

This paper begins by a general presentation of boundary integral formulations and a systematic way of building direct integral equations. This part is well-known but mandatory to prove the main results of this article. The first result, namely the equality between the preconditioned direct integral equation, is established in section \ref{sec:SingleScat}. The case of the indirect integral equation of Brakhage-Werner is studied thereafter in section \ref{sec:BW}. Then in section \ref{sec:num}, a numerical example using $\Pb^{1}$ finite elements discretization shows that the matrices of the discretized boundary integral equations share the same spectrum, and thus confirms the theoretical results. Finally, the paper ends with a short conclusion.


%
\section{Classical direct integral equations}\label{secEqInt:EqInt}


Here is presented a systematic method to obtain the usual boundary integral equation EFIE, MFIE and CFIE where the acronyms EFIE, MFIE and CFIE design here respectively \emph{Electric Field Integral Equation}, \emph{Magnetic Field Integral Equation} and \emph{Combined Field Integral Equation}. This section is widely inspired by the lecture of Bendali and Fares \cite{BenFar07} but is mandatory to introduce notations. More details can also be found for example in the the PhD thesis of the author \cite{Thi11} or of Darbas \cite{Dar04}.

	\subsection{The scattering problem}

Let the whole domain $\Rb^d$ be filled with a homogeneous and nondissipative medium, where $d=2,3$ is the dimension of the problem. Let also $\Omegam$ be a  bounded open set of $\Rb^{d}$ such that the propagation domain $\Omegaps =\Rb^{d}\setminus\overline{\Omegam}$ is connected. The boundary $\Gamma$ of $\Omegam$ is assumed to be smooth enough (say of class $C^{2}$) with a unit normal vector $\nn$ directed into $\Omegaps$. The illumination of the scatterer $\Omegam$ by a time-harmonic incident wave $\uinc$ gives rise to a scattered field $u$, solution of the following scattering problem (the time dependence is assumed to be of the form $e^{-i \omega t}$ and the wavenumber $k$ real and positive):
\begin{equation}\label{eqEqInt:ProblemeU}
\begin{cases}
        \Delta u + k^2 u = 0 &\text{in }\Omega^+, \\
        u = -\uinc & \text{on } \Gamma, \\
		u \text{ outgoing,}
\end{cases}
\end{equation}
where $\Delta = \sum_{j=1}^d\frac{\partial^{2}}{\partial x_{j}^{2}}$ is the Laplace operator and the outgoing condition stands for the Sommerfeld radiation condition:
$$
\displaystyle{\lim_{\|\xx\| \to \infty} \|\xx\|^{(d-1)/2}\left( \nabla u  \cdot \frac{\xx}{\|\xx\|} - iku\right)=0,}
$$
with $\|\xx\| = (\sum_{j=1}^{d}x_{j}^{2})^{1/2}$. 
This paper is restricted to a Dirichlet boundary condition (sound-soft obstacles) and an incident plane wave of direction $\Beta$:
$$
\forall \xx\in\Rb^{d}, \qquad \uinc(\xx) = e^{ik\Beta\cdot\xx}.
$$
However, the results presented in this article remain true with either a Neumann boundary condition and/or any incident wave smooth enough in a neighborhood of the obstacle $\Omegam$ (\textit{e.g.} a time harmonic wave emitted by point source, \textit{e.g.} Green function).
To obtain the direct integral equations, it is more convenient to work with the total field $\ut = u +\uinc$, solution of the following problem ($\ut$ exists in $\Omegaps$ since $\uinc$ is plane and thus is a solution of the Helmholtz equation in $\Rb^{d}$)
\begin{equation}\label{eqEqInt:ProblemeUt}
\begin{cases}
        \Delta \ut + k^2 \ut = 0 &\text{in }\Omega^+, \\
        \ut = 0 & \text{on } \Gamma, \\
		(\ut-\uinc) \text{ outgoing.}
\end{cases}
\end{equation}
Recall that both problems \eqref{eqEqInt:ProblemeU} and \eqref{eqEqInt:ProblemeUt} are uniquely solvable \cite{ColKre83}:
\begin{theorem}\label{theo:UniqueSolution}
The scattering problems (\ref{eqEqInt:ProblemeU}) and (\ref{eqEqInt:ProblemeUt}) admit a unique solution.
\end{theorem}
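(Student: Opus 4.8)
The plan is to reduce both problems to a single one and then argue uniqueness and existence separately. Since $\uinc$ is an entire solution of the Helmholtz equation, the substitution $\ut = u + \uinc$ transforms \eqref{eqEqInt:ProblemeU} into \eqref{eqEqInt:ProblemeUt} and conversely, so it suffices to treat the exterior Dirichlet problem for the total field; uniqueness for it is equivalent to uniqueness for the scattered field, and likewise for solvability.

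For uniqueness, let $w$ be the difference of two solutions, so that $w$ satisfies Helmholtz in $\Omegaps$, vanishes on $\Gamma$, and is outgoing. First I would apply Green's first identity on the truncated domain $\Omega_R = \Omegaps \cap B_R$, with $B_R$ a large ball containing $\Omegam$. Because $w=0$ on $\Gamma$, the boundary contribution over $\Gamma$ disappears and, taking imaginary parts, one is left controlling $\mathrm{Im}\int_{\partial B_R}\overline{w}\,\partial_r w\,ds$. Expanding $\int_{\partial B_R}|\partial_r w - ikw|^2\,ds$, whose vanishing as $R\to\infty$ is exactly the Sommerfeld radiation condition, I would deduce that $\int_{\partial B_R}|w|^2\,ds \to 0$. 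Rellich's lemma then gives $w\equiv 0$ outside a large ball, and since $\Omegaps$ is connected and solutions of the Helmholtz equation are real-analytic, unique continuation yields $w\equiv 0$ throughout $\Omegaps$.

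For existence I would invoke potential theory and the combined-layer (Brakhage--Werner) ansatz $\ut = D\varphi - i\eta\,S\varphi$, where $S$ and $D$ are the single- and double-layer potentials built on the outgoing Green function and $\eta\neq 0$. By construction such a potential solves Helmholtz in $\Omegaps$ and is outgoing; enforcing the Dirichlet trace on the $C^{2}$ boundary $\Gamma$ produces a boundary integral equation of the second kind, whose operator is the identity plus a compact perturbation. The Fredholm alternative then reduces solvability to injectivity, and the coupling $i\eta$ is precisely what eliminates the interior resonances, so injectivity follows from the uniqueness established above. This produces a solution of \eqref{eqEqInt:ProblemeUt}, hence of \eqref{eqEqInt:ProblemeU}.

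The main obstacle is the uniqueness argument, specifically the passage from the radiation condition to the $L^{2}$-decay $\int_{\partial B_R}|w|^2\,ds\to 0$ together with the appeal to Rellich's lemma and unique continuation; the existence part is comparatively routine once the combined-layer representation is fixed, the role of the $i\eta$ coupling being exactly to make the resulting second-kind equation invertible at every wavenumber $k$.
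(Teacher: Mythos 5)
Your proposal is correct and follows essentially the same route as the paper, which gives no proof of its own but simply cites Colton--Kress: the classical argument there is precisely your combination of Rellich's lemma and unique continuation for uniqueness, and the Brakhage--Werner combined-layer ansatz with the Fredholm alternative for existence. The only detail worth making explicit is the precise condition on the coupling parameter (e.g.\ $\eta$ real and nonzero in the ansatz $D\varphi - i\eta S\varphi$), since it is the interior Green identity applied to the null density that forces injectivity at every wavenumber.
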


	\subsection{Integral operators}\label{secEqInt:OpInt}

Let the volume single-layer integral operator $\Lop$ be defined by (see \textit{e.g.} \cite[Theorem 6.12]{McL00})
$$
\begin{array}{c c c l l}
\Lop: & \hmdemi & \longrightarrow & H^{1}_{loc}(\Rb^d) &\\
& \rho & \longmapsto & \Lop \rho, & \dsp{  \forall \xx \in \Rb^d, \quad \Lop\rho(\xx) = \int_\Gamma G(\xx,\yy) \rho(\yy) \, \dd\Gamma(\yy)},
\end{array}
$$
and the volume double-layer integral operator $\Mop$ by
$$
\begin{array}{c c c l l }
\Mop: & \hdemi & \longrightarrow & H^{1}_{loc}(\Rb^d \setminus \Gamma) & \\
& \lambda & \longmapsto & \Mop \lambda, & \dsp{\forall \xx \in \Rb^d \setminus \Gamma,  \Mop\lambda(\xx) = -\int_\Gamma \dny G(\xx,\yy) \lambda(\yy) \, \dd\Gamma(\yy)},
\end{array}
$$
where the spaces $\hmdemi$, $H^{1}(\Rb^d \setminus \Gamma)$, $H^1_{loc}(\Rb^d \setminus \Gamma)$ are the usual Sobolev spaces and the Green function $G(\cdot\,,\cdot)$ is given by
\begin{equation}\label{eqEqInt:Green}
\forall \xx, \yy \in \Rb^d, \;\xx \neq \yy, \qquad G(\xx,\yy) = 
\begin{cases}
\dsp{\frac{i}{4}H_0^{(1)}(k\|\xx-\yy\|)}, & \text{ if } d = 2,\\[0.2cm]
\dsp{\frac{e^{ik\|\xx - \yy\|}}{4\pi\|\xx - \yy\|}}, & \text{ if } d = 3.
\end{cases}
\end{equation}
\begin{remark}
All the integrals on $\Gamma$ must be seen as a dual product between the Sobolev space $H^{1/2}(\Gamma)$ and its dual $H^{-1/2}(\Gamma)$. However, as soon as the data ($\uinc$ and $\Gamma$) are smooth enough, then the scattered field $u$ is also smooth and the dual product can be identified with the (non-hermitian) scalar product on $L^2(\Gamma)$:
$$
 \PSdemi{f}{g} = \int_\Gamma f(\xx)g(\xx)\;\dd\Gamma(\xx).
$$
This identification is considered throughout this paper.
\end{remark}

The trace $\gammazpm$ and the normal trace $\gammaupm$ operators are now defined following and inspired by \cite[Appendix A]{ChaGraLan12}, where the plus or minus sign specifies whether the trace is taken from the inside of $\Omegaps$ or $\Omegam$. First, the trace operators $\gammazpm: H^1(\Omegapm) \to H^{1/2}(\Gamma)$ are defined so that, if $v\in C^{\infty}(\overline{\Omegapm})$, then
$$
\gammazpm v(\xx) = \lim_{\zz \in \Omegapm \to \xx}v(\zz),
$$
for almost every $\xx\in\Gamma$. By introducing the space $H^1(\Omegapm; \Delta) := \{v\in H^1(\Omegapm) ; \Delta v\in L^2(\Omegapm)\}$ and the linear operators $\gammazpmd:H^{1/2}(\Gamma)\to H^1(\Omegapm)$ such that $\gammazpm\gammazpmd\varphi=\varphi$, for all $\varphi \in H^{1/2}(\Gamma)$, the normal traces $\gammaupm: H^1(\Omegapm; \Delta)\to H^{-1/2}(\Gamma)$ can be defined \cite[Equation (A.28)]{ChaGraLan12}:
\begin{multline}\label{eq:gammaupm}
\forall v\in H^{1}(\Omegapm; \Delta), \forall \varphi\in H^{1/2}(\Gamma),\\
\left(\gammaupm v, \varphi\right)_{H^{-1/2}(\Gamma), H^{1/2}(\Gamma)} 
:= \mp\left[\int_{\Omegapm}\Delta v(\xx)\overline{w(\xx)}\;\dd\xx + \int_{\Omegapm}\nabla v(\xx)\cdot\nabla \overline{w(\xx)}\;\dd\xx\right],
\end{multline}
where  $w:=\gammazpmd \varphi$ (and thus satisfies $\gammazpm w = \varphi$). As the quantities involved in scattering problem do not belong to $H^1(\Omegaps)$ but to $H^1_{loc}(\Omegaps)$, the exterior trace and normal trace operators are naturally extended as $\gammazps:H^1_{loc}(\Omegaps)\to H^{1/2}(\Gamma)$ and $\gammaups:H^1_{loc}(\Omegaps;\Delta)\to H^{-1/2}(\Gamma)$ by $\gammazps(v)=\gammazps(vv')$ and $\gammaups(v)=\gammaups(vv')$, where $v'$ is an arbitrary compactly supported and indefinitely differentiable function on $\overline{\Omegaps}$ which is equal to $1$ in a neighborhood of $\Gamma$, and where $H^1_{loc}(\Omegaps; \Delta) := \{v\in H^1_{loc}(\Omegaps) ; \Delta v\in L^2_{loc}(\Omegaps)\}$. Remark that, when the function $v$ is sufficiently smooth, then its normal trace $\gammaupm v$, given by (\ref{eq:gammaupm}), belongs to $L^2(\Gamma)$ and can be written as $\gammaupm v(\xx) = \lim_{\zz\in\Omegapm\to\xx}\nabla v(\zz)\cdot \nn(\xx)$, for almost every $\xx$ on $\Gamma$.
Note also that, the single- and double-layer potentials, introduced previously, belong not only to $H^1_{loc}(\Omegaps)\bigcup H^1(\Omegam)$ but also to $H^1_{loc}(\Omegaps; \Delta)\bigcup H^1(\Omegam; \Delta)$ (see \eg \cite[\S2.2]{ChaGraLan12}).
Some well-known properties of the single- and double-layer potentials are summarized in the following propositions. Their proof can be found for example in \cite[Theorems 7.5 and 9.6]{McL00} for proposition \ref{propEqInt:potentiel} and in \cite[Theorem 6.12]{McL00} for proposition \ref{propEqInt:trace}.
\begin{prop}\label{propEqInt:potentiel}
For every densities $\rho \in \hmdemi$ and $\lambda \in \hdemi$, the single-layer potential $\Lop\rho$ and double-layer potential $\Mop\lambda$ are outgoing solutions of the Helmholtz equation in $\Rb^d \setminus \Gamma$. Moreover, the scattered field $u$, solution of (\ref{eqEqInt:ProblemeU}), can be written as
$$
\forall\xx\in\Omegaps,\qquad u(\xx) = -\Lop(\dn u|_{\Gamma}) (\xx)-\Mop(u|_{\Gamma}) (\xx).
$$
\end{prop}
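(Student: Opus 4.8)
The plan is to treat the two assertions separately, both being classical facts of Helmholtz potential theory. For the first, I would start from the defining property of the Green function, namely that $G(\xx,\yy)$ is a fundamental solution, so that $(\Delta_{\xx}+k^2)G(\xx,\yy)=0$ whenever $\xx\neq\yy$. For a fixed $\xx\notin\Gamma$, the kernels $G(\xx,\yy)$ and $\dny G(\xx,\yy)$ are smooth in $\xx$ on a neighborhood disjoint from $\Gamma$, which legitimates differentiating under the integral sign (when $\rho\in\hmdemi$ and $\lambda\in\hdemi$ are not smooth, the integrals are read as duality pairings, and the smoothness of the kernels in $\xx$ still lets the differential operator act on the kernel). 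Applying $\Delta+k^2$ then moves the operator onto the kernel and yields $(\Delta+k^2)\Lop\rho=0$ and $(\Delta+k^2)\Mop\lambda=0$ on $\Rb^d\setminus\Gamma$. The outgoing character is obtained in the same spirit: since $\Gamma$ is compact, the large-argument asymptotics of $G$ as $\|\xx\|\to\infty$ (the expansion of $H_0^{(1)}$ in dimension two, the explicit exponential in dimension three) hold uniformly in $\yy\in\Gamma$, so the Sommerfeld condition passes to the integral and to its $\xx$-derivatives.

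For the second assertion I would invoke Green's representation formula. Fixing $\xx\in\Omegaps$, I introduce a large ball $B_R$ containing $\Gamma$ and $\xx$, and a small ball $B_{\varepsilon}(\xx)$ around the singularity, and apply Green's second identity to $u$ and $v:=G(\xx,\cdot)$ on the punctured truncated domain $\Omegaps\cap B_R\setminus\overline{B_{\varepsilon}(\xx)}$. Since both $u$ (by Theorem~\ref{theo:UniqueSolution} and problem~\eqref{eqEqInt:ProblemeU}) and $v$ solve the Helmholtz equation there, the integrand $u\,\Delta v-v\,\Delta u=-k^2uv+k^2vu$ vanishes, so the volume term is zero and only the boundary contributions over $\Gamma$, $\partial B_R$ and $\partial B_{\varepsilon}(\xx)$ remain, with due care for the orientation of $\nn$, which points into $\Omegaps$.

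It then remains to pass to the limits $\varepsilon\to0$ and $R\to\infty$. As $\varepsilon\to0$, the strength of the singularity of $G$ is exactly calibrated so that $\int_{\partial B_{\varepsilon}(\xx)}(u\,\partial_n v-v\,\partial_n u)\,\dd s\to u(\xx)$, the term carrying $v$ being negligible since the growth of $|v|$ (logarithmic in dimension two, of order $\varepsilon^{-1}$ in dimension three) is dominated by the $O(\varepsilon^{d-1})$ measure of the sphere. The surviving integral over $\Gamma$ is precisely $-\Lop(\dn u|_{\Gamma})(\xx)-\Mop(u|_{\Gamma})(\xx)$ once the orientation of $\nn$ and the definitions of $\Lop$ and $\Mop$ are taken into account. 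The main obstacle is the far-field sphere: I expect the bulk of the work to be showing that $\int_{\partial B_R}(u\,\partial_n v-v\,\partial_n u)\,\dd s\to0$. The clean route is to write $u\,\partial_n v-v\,\partial_n u=u(\partial_n v-ikv)-v(\partial_n u-iku)$, so that the radiation condition on both fields makes $\|\partial_n v-ikv\|_{L^2(\partial B_R)}$ and $\|\partial_n u-iku\|_{L^2(\partial B_R)}$ tend to zero with the correct rate; combined with the uniform $L^2(\partial B_R)$-boundedness of $u$ and $v$, Cauchy--Schwarz on each product forces the integral to $o(1)$. Controlling the $L^2$-norm of $u$ over the expanding spheres --- via a Rellich-type identity rather than merely invoking the pointwise radiation condition --- is the delicate point of the argument.
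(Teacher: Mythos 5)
Your argument is correct and is essentially the classical proof that the paper itself defers to (it cites \cite[Theorems 7.5 and 9.6]{McL00} rather than proving the proposition): differentiation under the integral sign plus uniform far-field asymptotics of $G$ for the first assertion, and Green's second identity on $\Omegaps\cap B_R\setminus\overline{B_{\varepsilon}(\xx)}$ with the radiation-condition splitting of the $\partial B_R$ term for the representation formula. The signs also check out against the paper's convention, since $\Mop$ carries a built-in minus sign and $\nn$ points out of $\Omegam$.
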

\begin{prop}\label{propEqInt:trace}
The trace and the normal trace of the operators $\Lop$ and $\Mop$ are given by the following relations 
\begin{equation}\label{eqEqInt:trace}
\begin{array}{l @{\qquad \qquad}l }
\dsp{\gammazpm\Lop \rho =L \rho}, &
\dsp{\gammazpm \Mop\lambda = \left(\mp \frac{1}{2}I + M\right) \lambda},\\[0.3cm]
\dsp{\gammaupm \Lop \rho  = \left( \mp \frac{1}{2}I + N\right)\rho},&
\dsp{\gammaupm\Mop\lambda = D \lambda},
\end{array}
\end{equation}
where $I$ is the identity operator and, for $\xx\in\Gamma, \rho \in\hmdemi$ and $\lambda\in\hdemi$, the four boundary integral operators are defined by
\begin{equation}\label{eqEqInt:OpIntBord2}
\begin{array}{l l c l @{\quad\qquad}l c l}
L : & H^{-1/2}(\Gamma) & \longrightarrow & \dsp{H^{1/2}(\Gamma),} & \dsp{L\rho(\xx)} &  =  &\dsp{ \int_{\Gamma} G(\xx,\yy) \rho(\yy) \dd\Gamma(\yy)}, \\[0.3cm]
M : & H^{1/2}(\Gamma) & \longrightarrow & \dsp{H^{1/2}(\Gamma),} & \dsp{M\lambda(\xx) } &  =  &\dsp{  -\int_{\Gamma} \dny G(\xx,\yy)\lambda(\yy) \dd\Gamma(\yy)}, \\[0.3cm]
N : & H^{-1/2}(\Gamma) & \longrightarrow & \dsp{H^{-1/2}(\Gamma),} & \dsp{N\rho(\xx)  }&  =  &\dsp{  \int_{\Gamma} \dnx G(\xx,\yy) \rho(\yy) \dd\Gamma(\yy) = -M^* \rho (\xx)}, \\[0.3cm]
D : & H^{1/2}(\Gamma) & \longrightarrow & \dsp{H^{-1/2}(\Gamma),} &\dsp{D\lambda (\xx) }  &  =  &\dsp{  -\dnx \int_{\Gamma} \dny G(\xx,\yy)\lambda (\yy) \dd\Gamma(\yy)}.
\end{array}
\end{equation}
\end{prop}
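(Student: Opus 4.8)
The plan is to recognize the four identities of \eqref{eqEqInt:trace} as the classical jump relations of potential theory, proving them first for smooth densities $\rho,\lambda$ and then extending to all of $\hmdemi$ and $\hdemi$ by density together with the boundedness of $L,M,N,D$ and the continuity of the trace operators. First I would treat the single-layer trace. Near the diagonal the Green function $G(\xx,\yy)$ behaves like the logarithmic ($d=2$) or the $\|\xx-\yy\|^{-1}$ ($d=3$) singularity, hence is weakly singular and locally integrable on $\Gamma$; consequently $\Lop\rho$ is continuous across $\Gamma$ for smooth $\rho$, and letting $\xx\to\Gamma$ under the integral sign yields exactly the weakly singular operator $L$ from either side. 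This gives the first relation $\gammazpm\Lop\rho = L\rho$.

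Next I would establish the two relations carrying the $\mp\frac12 I$ term. The key ingredient is a Gauss-type lemma: since $G$ differs from the static fundamental solution by a kernel with a strictly weaker singularity, the boundary behaviour of $\int_\Gamma \dny G(\xx,\yy)\,\dd\Gamma(\yy)$ as $\xx$ crosses $\Gamma$ coincides with the Laplace case, equalling the solid-angle value that jumps by $1$ with the half-value on $\Gamma$. To localize this near a point $\xx_0\in\Gamma$ I would split
\[
\Mop\lambda(\xx) = -\int_\Gamma \dny G(\xx,\yy)\,[\lambda(\yy)-\lambda(\xx_0)]\,\dd\Gamma(\yy) \;-\; \lambda(\xx_0)\int_\Gamma \dny G(\xx,\yy)\,\dd\Gamma(\yy),
\]
where the first term is continuous across $\Gamma$ because the factor $\lambda(\yy)-\lambda(\xx_0)$ absorbs the singularity, while the second produces the $\mp\tfrac12\lambda(\xx_0)$ jump; passing to the limit from $\Omegaps$ and from $\Omegam$ gives $\gammazpm\Mop\lambda = (\mp\tfrac12 I + M)\lambda$. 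The single-layer normal trace is handled by the same mechanism applied to the kernel $\dnx G(\xx,\yy)$, yielding $\gammaupm\Lop\rho = (\mp\tfrac12 I + N)\rho$, and the identity $N=-M^*$ then follows from the reciprocity $G(\xx,\yy)=G(\yy,\xx)$ and Fubini's theorem in the pairing $\PSdemi{\cdot}{\cdot}$.

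The last relation $\gammaupm\Mop\lambda = D\lambda$ is the main obstacle, since the kernel $\dnx\dny G$ is hypersingular and one cannot simply differentiate under the integral sign. The hard part will be to show that the normal derivative of the double-layer potential is nonetheless continuous across $\Gamma$. I would avoid the naive approach and instead use the weak definition \eqref{eq:gammaupm}: with $v=\Mop\lambda$, which by Proposition \ref{propEqInt:potentiel} satisfies $\Delta v = -k^2 v$ on each side, the pairing $\langle\gammaupm\Mop\lambda,\varphi\rangle$ is rewritten as a volume expression over $\Omegapm$, and a Green's-identity argument (Maue's integration-by-parts identity) recasts $D$ in terms of tangential surface derivatives acting on a weakly singular, single-layer-type kernel. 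Continuity of this regularized form across $\Gamma$ then follows from the single-layer continuity established in the first paragraph, so both normal traces equal $D\lambda$. Removing the smoothness hypothesis is routine: smooth densities are dense, $L,M,N,D$ are bounded between the stated spaces, and $\gammazpm,\gammaupm$ are continuous on $H^1(\Omegapm)$ and $H^1(\Omegapm;\Delta)$, so all four identities extend by continuity to $\rho\in\hmdemi$ and $\lambda\in\hdemi$.
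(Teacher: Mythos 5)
Your outline is correct and is the classical potential-theoretic proof of the jump relations; note, however, that the paper does not prove this proposition at all — it simply cites \cite[Theorem 6.12]{McL00}. McLean's proof proceeds quite differently: the layer potentials are treated from the outset as operators on the Sobolev scale, the jump relations are derived from Green's identities and duality for the transmission problem, and the mapping properties of $L$, $M$, $N$, $D$ come out of the same variational argument rather than being invoked separately; that route has the advantage of working on merely Lipschitz boundaries. Your route (pointwise limits for smooth densities via the weak singularity of $G$, the Gauss-lemma splitting for the $\mp\frac{1}{2}I$ terms, Maue's integration by parts for the hypersingular part, then extension by density) is legitimate under the paper's standing $C^2$ assumption on $\Gamma$, and is arguably more transparent about where the $\mp\frac{1}{2}$ comes from. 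Two small cautions: the final density step is not free, since it presupposes exactly the boundedness of $L:\hmdemi\to\hdemi$, $D:\hdemi\to\hmdemi$, etc., and the continuity of $\Lop$, $\Mop$ into $H^1_{loc}$ of each side — facts which in the modern treatment are proved \emph{together with} the jump relations, so you should either cite them independently (Costabel, or the classical H\"older-space theory) or accept a mild circularity; and in the Maue step, "continuity follows from the single-layer continuity" is slightly too quick, since the regularized form involves tangential derivatives of single-layer potentials of tangential derivatives of $\lambda$, so you need the $H^1$-regularity of the single layer up to the boundary, not just continuity of its trace. Neither point is a fatal gap, but both deserve a sentence in a complete write-up.
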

In this paper, the boundary integral operators are written with a roman letter ({\it e.g.} $L$) whereas the volume integral operators are written with a calligraphic letter ({\it e.g.} $\Lop$). 
%
%
According to \cite[Theorems 3.4.1 and 3.4.2]{Ned01}, the boundary integral operators $L$ and $D$ are invertible, providing $k$ is not an irregular frequency.
 \begin{theorem}\label{theo:FreqIr}
Let $F_D(\Omegam)$ (resp. $F_N(\Omegam)$) be the countable set of positive wavenumbers $k$ accumulating at infinity such that the interior homogeneous Dirichlet (resp. Neumann) problem
\begin{equation}\label{eqEqInt:problemeinterne1}
\begin{cases}
-\Delta v = k^2 v & \text{in }\Omega^-, \\
v = 0 \left(\text{resp. } \dn v = 0 \right)& \text{on } \Gamma, \\ 
\end{cases}
\end{equation}
admits non-trivial solutions. Then, the operator $L$ (resp. $D$) realizes an isomorphism from $\hmdemi$ into $\hdemi$ (resp. from $\hdemi$ into $\hmdemi$) if and only if $k \not\in F_D(\Omegam)$ (resp. $k \not\in F_N(\Omegam)$).
\end{theorem}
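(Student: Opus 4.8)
The plan is to establish the equivalence for $L$ in full and to obtain the statement for $D$ by the same four steps with the obvious substitutions (double-layer potential in place of single-layer, Neumann data in place of Dirichlet, $F_N(\Omega^-)$ in place of $F_D(\Omega^-)$). The strategy is the classical two-stage one. First I would show that $L: H^{-1/2}(\Gamma)\to H^{1/2}(\Gamma)$ is a Fredholm operator of index zero, independently of $k$, so that by the Fredholm alternative bijectivity is equivalent to injectivity. Then I would characterise $\ker L$ by potential theory and prove that it is trivial exactly when $k\notin F_D(\Omega^-)$.

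For the Fredholm step I would compare $L$ with the single-layer operator $L_0$ built from the Laplace kernel (equivalently, a purely imaginary wavenumber, to keep ellipticity). The kernel of $L-L_0$ is less singular than the leading singularity of $G$, so $L-L_0:H^{-1/2}(\Gamma)\to H^{1/2}(\Gamma)$ is compact; since $L_0$ is $H^{-1/2}(\Gamma)$-coercive up to a compact perturbation (a G\aa rding inequality), $L$ inherits the same G\aa rding inequality and is Fredholm of index zero. It then suffices to discuss injectivity.

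Assume $k\notin F_D(\Omega^-)$ and let $\rho$ satisfy $L\rho=0$. Set $v:=\Lop\rho$, which by Proposition \ref{propEqInt:potentiel} is an outgoing Helmholtz solution in $\Omega^+$ and a Helmholtz solution in $\Omega^-$, with continuous Dirichlet trace $\gamma_0^\pm v = L\rho = 0$ by \eqref{eqEqInt:trace}. In $\Omega^+$, $v$ solves the exterior homogeneous Dirichlet problem, hence $v=0$ by the uniqueness Theorem \ref{theo:UniqueSolution}; in $\Omega^-$, $v$ solves the interior problem \eqref{eqEqInt:problemeinterne1}, which has only the trivial solution precisely because $k\notin F_D(\Omega^-)$, so $v=0$ there too. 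The normal-trace jump read from \eqref{eqEqInt:trace}, namely $\gamma_1^+v-\gamma_1^-v=(-\tfrac12 I+N)\rho-(\tfrac12 I+N)\rho=-\rho$, then forces $\rho=0$, so $L$ is injective, hence bijective. For the converse, if $k\in F_D(\Omega^-)$ I would pick a nontrivial $v$ solving \eqref{eqEqInt:problemeinterne1} and set $\rho:=\gamma_1^-v$; the interior Green representation together with $\gamma_0^-v=0$ kills the double-layer term and gives $v=\Lop\rho$ in $\Omega^-$, whence $L\rho=\gamma_0^-v=0$. Finally $\rho\neq0$, since $\gamma_1^-v=0$ would mean $v$ has vanishing Cauchy data and unique continuation would give $v\equiv0$; thus $\ker L\neq\{0\}$ and $L$ is not an isomorphism.

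The operator $D$ is treated identically, using the double-layer potential $\Mop\lambda$: by \eqref{eqEqInt:trace} its normal trace is continuous across $\Gamma$ and equals $D\lambda$, while its Dirichlet trace jumps by $-\lambda$, and the interior/exterior Dirichlet uniqueness statements are replaced by their Neumann counterparts, the exterior Neumann problem at real $k>0$ being again uniquely solvable by the Rellich--Sommerfeld argument. The step I expect to be the main obstacle is the Fredholm property for $D$: the Laplace hypersingular operator is coercive on $H^{1/2}(\Gamma)$ only modulo constants, so the G\aa rding inequality must be arranged with care. This causes no trouble here, however, because no nonzero constant solves the Helmholtz equation, so the constants never enter $\ker D$ at wavenumber $k$, and index-zero Fredholmness combined with the injectivity analysis above still yields the claimed isomorphism.
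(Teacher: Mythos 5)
Your argument is correct: Fredholm of index zero via comparison with the Laplace-kernel operator plus a G{\aa}rding inequality, then the potential-theoretic characterisation of the kernel through the jump relations and interior/exterior uniqueness, with the converse supplied by the Green representation of an interior eigenfunction and Holmgren's theorem. The paper does not actually prove this statement---it simply cites \cite[Theorems 3.4.1 and 3.4.2]{Ned01}---and the proof given there is essentially the one you have written, so there is nothing to reconcile.
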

These irregular frequencies $k$ of $F_D(\Omegam)$ (resp. of $F_N(\Omegam)$) are exactly the square roots of the eigenvalues of the Laplacian operator $(-\Delta)$ for the homogeneous interior Dirichlet (resp. Neumann) problem. In the multiple scattering case, that is when $\Omegam = \bigcup_{p=1}^M \Omegamp$ is multiply connected, the following equalities clearly hold true
\begin{equation}\label{eq:FDOmegamp}
F_D(\Omegam) = \bigcup_{p=1}^M F_D(\Omegamp) \qquad\text{ and } \qquad F_N(\Omegam) = \bigcup_{p=1}^M F_N(\Omegamp).
\end{equation}
Throughout the paper, $F_{DN}(\Omegam)$ denotes the set of all irregular frequencies:
\begin{equation}\label{eq:FDN}
F_{DN}(\Omegam) = F_{D}(\Omegam)\bigcup F_{N}(\Omegam).
\end{equation}



\subsection{Direct integral equations}

This section details the way of deriving direct integral equations, described in \cite{BenFar07}. This approach is nonstandard but has advantages that appear later in the paper at Section \ref{sec:SingleScat}.

The principle is to write the total field $\ut$ as a combination of a single- and a double-layer potentials:
\begin{equation}\label{eqEqInt:ut}
\ut(\xx) = \Lop \rho (\xx) + \Mop \lambda (\xx) + \uinc(\xx), \qquad \forall \xx \in \Omegaps,
\end{equation}
where $(\lambda, \rho)$ are now the two unknown of the problem. Thanks to proposition \ref{propEqInt:potentiel}, such an expression ensures that both $\ut$ is solution of the Helmholtz equation in $\Omega^+$ and $(\ut-\uinc)$ is outgoing. 
Following \cite{BenFar07}, an integral equation is said to be direct when the densities $(\lambda,\rho)$ have a physical meaning. Indeed, for these integral equations, they are exactly the Cauchy data $\left( -\ut|_\Gamma, -\dn\ut|_\Gamma \right)$. However, this is not a choice but a consequence of the construction of the integral equation. In electromagnetic scattering, direct and indirect integral equations are more often referred to as respectively \emph{field} and \emph{source} integral equations (see \textit{e.g.} Harington and Mautz \cite{HarMau78, MauHar79} or PhD thesis of Sophie Borel \cite{Bor06}).

For now on, the problem, composed by the two unknown $(\lambda,\rho)$, has only one equation given by the Dirichlet boundary condition on $\Gamma$. To obtain a second equation, a fictitious interior wave $\utm$, living in $\Omegam$, is introduced and defined by
\begin{equation}\label{eqEqInt:utm}
\utm(\xx) = \Lop \rho (\xx) + \Mop \lambda (\xx) + \uinc(\xx), \qquad \forall \xx \in \Omegam.
\end{equation}
Remark that, on the one hand $\utm$ is a solution of the Helmholtz equation in $\Omegam$ and on the other hand, due to the trace relations (\ref{eqEqInt:trace}), the couple of unknown $(\lambda,\rho)$ satisfies the well-known \emph{jump-relation}
\begin{equation}\label{eqEqInt:jump}
\left\{
\begin{array}{l}
\lambda = \utm|_\Gamma - \ut|_\Gamma, \\[0.2cm]
\rho = \dn \utm|_\Gamma - \dn \ut|_\Gamma.
\end{array}
\right.
\end{equation}
As the wave $\utm$ is fictitious, it does not act on the solution $\ut$ of the scattering problem. As a consequence, the boundary condition on $\Gamma$ imposed to $\utm$ has no influence on $\ut$. Let this constraint be represented by an operator $A$ such that $\utm$ is the solution of the following interior problem
\begin{equation}\label{eqEqInt:ProblemeInterne}
\begin{cases}
        \Delta \utm + k^2 \utm = 0 &\text{in }\Omegam, \\
        A \utm = 0 & \text{on } \Gamma.
	\end{cases}
\end{equation}
To build a direct integral equation, the operator $A$ is chosen such that the field $\utm$ vanishes in $\Omegam$.
Supposing that such an operator exists, then, on the boundary $\Gamma$, the following equalities will hold true
$$
\begin{cases}
\utm|_\Gamma = 0,  & \\
\dn\utm|_\Gamma = 0. &
\end{cases}
$$
Consequently and thanks to the Dirichlet boundary condition $\ut|_{\Gamma}=0$,  the jump relations (\ref{eqEqInt:jump}) will read as
$$
\begin{cases}
\lambda = 0,&\\
\rho = -\dn\ut|_\Gamma, &
\end{cases}
$$
Therefore, both the fictitious field $\utm$ and the total field $\ut$ will be composed by a single-layer potential only
$$
\left\{\begin{array}{l}
\dsp{\ut(\xx) = \Lop \rho (\xx) + \uinc(\xx), \qquad \forall\xx \in \Omegaps,}\\[0.2cm]
\dsp{\utm(\xx) = \Lop \rho (\xx) + \uinc(\xx), \qquad \forall\xx \in \Omegam.}
\end{array}\right.
$$
The unknown $\rho$ is finally obtained through the resolution of the (direct) integral equation $A\utm = 0$, which can be written as
\begin{equation}\label{eqEqInt:EqInt}
A\Lop\rho = -A\uinc.
\end{equation}
Both the expression and the nature of the integral equation (\ref{eqEqInt:EqInt}) depend on the boundary condition imposed to $\utm$, represented here by the operator $A$.  The next subsections describe the three usual direct integral equations that are studied in this paper. The proofs are not provided and can be found for example in \cite{BenFar07} or \cite{Thi11}.

\subsubsection{EFIE (\textit{Electric Field Integral Equation})}

For this integral equation, the operator $A$ is the interior trace operator $\gammazm$ on $\Gamma$. Thanks to the continuity on $\Gamma$ of the single-layer integral operator $\Lop$ (see equation (\ref{eqEqInt:trace})), the boundary integral equation (\ref{eqEqInt:EqInt}) becomes
\begin{equation}\label{eq:EFIE}
L \rho = - \uincg.
\end{equation}
Due to theorem \ref{theo:FreqIr}, this first kind integral equation, named \emph{Electric Field Integral Equation} (EFIE), is well-posed and equivalent to the scattering problem (\ref{eqEqInt:ProblemeUt}) 
except for Dirichlet irregular frequencies. 
\begin{prop}\label{prop:EFIE}
If $k\not\in F_D(\Omegam)$ then the single-layer potential $\Lop\rho + \uinc$ is solution of the scattering problem (\ref{eqEqInt:ProblemeUt}) \ssi $\rho$ is the solution of the EFIE (\ref{eq:EFIE}). 
\end{prop}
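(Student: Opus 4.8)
The plan is to establish the two implications separately, each of which reduces to a short computation using the trace and potential properties already collected in Propositions~\ref{propEqInt:potentiel} and~\ref{propEqInt:trace}, and then to invoke Theorem~\ref{theo:FreqIr} to turn the equivalence into a statement about \emph{the} (unique) solution on each side.

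First I would treat the implication from the EFIE to the scattering problem. Assuming $\rho$ satisfies $L\rho = -\uincg$, I set $\ut := \Lop\rho + \uinc$ and check the three conditions of \eqref{eqEqInt:ProblemeUt} one by one. The Helmholtz equation in $\Omegaps$ holds because $\Lop\rho$ is an outgoing solution of the Helmholtz equation on $\Rb^d\setminus\Gamma$ by Proposition~\ref{propEqInt:potentiel}, while $\uinc$ is a plane wave and hence solves the Helmholtz equation everywhere; the outgoing condition on $\ut-\uinc=\Lop\rho$ is the same statement of Proposition~\ref{propEqInt:potentiel}. For the Dirichlet condition I would apply the exterior trace $\gammazps$ and use the first identity of \eqref{eqEqInt:trace}, namely $\gammazps\Lop\rho = L\rho$, together with $\gammazps\uinc = \uincg$, which gives $\gammazps\ut = L\rho + \uincg = 0$. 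Note that this direction uses no hypothesis on $k$.

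For the converse I would start from the assumption that $\ut = \Lop\rho + \uinc$ solves \eqref{eqEqInt:ProblemeUt}. The only condition that actually constrains $\rho$ is the Dirichlet one, the other two being automatically satisfied by the ansatz for any density; so taking the exterior trace and using the same two identities yields $0 = \gammazps\ut = L\rho + \uincg$, which is precisely the EFIE \eqref{eq:EFIE}, and again no restriction on $k$ is required. The hypothesis $k\notin F_D(\Omegam)$ enters only at the end: by Theorem~\ref{theo:FreqIr} it makes $L$ an isomorphism from $\hmdemi$ onto $\hdemi$, so the EFIE has exactly one solution, and combined with the unique solvability of \eqref{eqEqInt:ProblemeUt} from Theorem~\ref{theo:UniqueSolution} the two implications become a genuine one-to-one correspondence, justifying the definite article in ``the solution''. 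I expect no real obstacle here, only bookkeeping: one must justify that the single-layer potential has no jump in its Dirichlet trace across $\Gamma$ (so that the interior and exterior traces coincide and equal $L\rho$, as recorded in \eqref{eqEqInt:trace}) and that $\gammazps\uinc = \uincg$ because $\uinc$ is smooth across $\Gamma$; with these identifications in place the rest is direct substitution.
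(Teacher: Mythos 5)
Your proof is correct. The paper does not actually print a proof of this proposition (it defers to \cite{BenFar07} and \cite{Thi11}), but your two-implication argument --- Helmholtz equation and radiation condition from Proposition~\ref{propEqInt:potentiel}, the Dirichlet condition from the continuity of the single-layer trace in Proposition~\ref{propEqInt:trace}, with the hypothesis $k\not\in F_D(\Omegam)$ entering only through Theorem~\ref{theo:FreqIr} to make $L$ injective and through Theorem~\ref{theo:UniqueSolution} to justify speaking of \emph{the} solution on each side --- is the standard route and is fully consistent with the paper's framework.
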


\begin{remark}
When $k\in F_{D}(\Omegam)$, the integral operator $L$ is no more bijective but is still one-to-one. It can be shown that the kernel of the operator $L$ is a subset of the kernel of the operator $\Lop$. Consequently, for every solution $\rhot$ of the EFIE, the associated single-layer potential $\Lop\rhot + \uinc$ is still the solution of the scattering problem (\ref{eqEqInt:ProblemeUt}).
\end{remark}

\subsubsection{MFIE (\textit{Magnetic Field Integral Equation})}

Another possibility is to chose $A = \gammaum$, the interior normal trace. Using traces formul\ae{}  (\ref{eqEqInt:trace}), the integral equation (\ref{eqEqInt:EqInt}) becomes
\begin{equation}\label{eq:MFIE}
\MFIED \rho = -  \duincg.
\end{equation}
This Fredholm second kind integral equation, named \emph{Magnetic Field Integral Equation} (MFIE), is well-posed and equivalent to the scattering problem (\ref{eqEqInt:ProblemeUt}) as far as $k$ is not an irregular Neumann frequency.
\begin{prop}\label{prop:MFIE}
If $k\not\in F_N(\Omegam)$, then the quantity $\Lop\rho+\uinc$ is the solution of the scattering problem (\ref{eqEqInt:ProblemeUt}) \ssi $\rho$ is the solution of the MFIE (\ref{eq:MFIE}). 
\end{prop}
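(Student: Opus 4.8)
The plan is to treat the two implications of the equivalence separately, after first recording why, for $k\notin F_N(\Omegam)$, the operator $\MFIED=\frac{1}{2}I+N$ governing \eqref{eq:MFIE} is an isomorphism (this is what ``well-posed'' should mean here). Since $\Gamma$ is smooth, $N$ is compact, so $\frac{1}{2}I+N$ is Fredholm of index zero and it suffices to prove injectivity. If $\bigl(\frac{1}{2}I+N\bigr)\rho=0$, then by the trace relations \eqref{eqEqInt:trace} the single-layer potential $\Lop\rho$ has vanishing interior normal trace $\gammaum\Lop\rho=0$, hence solves the homogeneous interior Neumann problem \eqref{eqEqInt:problemeinterne1}; since $k\notin F_N(\Omegam)$ it vanishes in $\Omegam$. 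Continuity of the single-layer trace then gives $L\rho=\gammazm\Lop\rho=0$, so $\Lop\rho$ solves the homogeneous exterior Dirichlet problem and vanishes in $\Omegaps$ by uniqueness (Theorem \ref{theo:UniqueSolution}); the normal-trace jump $\gammaum\Lop\rho-\gammaups\Lop\rho=\rho$ then forces $\rho=0$.

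For the implication ``$\rho$ solves \eqref{eq:MFIE} $\Rightarrow$ $\Lop\rho+\uinc$ solves \eqref{eqEqInt:ProblemeUt}'', I would reuse the fictitious interior field of the construction. Set $\ut:=\Lop\rho+\uinc$ in $\Omegaps$ and $\utm:=\Lop\rho+\uinc$ in $\Omegam$. By Proposition \ref{propEqInt:potentiel}, $\ut$ already satisfies the Helmholtz equation in $\Omegaps$ and $\ut-\uinc=\Lop\rho$ is outgoing, so only the boundary condition $\gammazps\ut=0$ is at stake. Now $\gammaum\utm=\bigl(\frac{1}{2}I+N\bigr)\rho+\duincg=0$ by \eqref{eqEqInt:trace} and \eqref{eq:MFIE}, so $\utm$ solves the homogeneous interior Neumann problem and, as $k\notin F_N(\Omegam)$, vanishes on $\Omegam$. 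Hence $L\rho+\uincg=\gammazm\utm=0$, and continuity of the single-layer trace transfers this equality to the exterior, $\gammazps\ut=L\rho+\uincg=0$, which is exactly the missing Dirichlet condition.

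The converse implication is the one I expect to be delicate, and it is where the interplay of irregular frequencies really enters. The obstacle is that the exterior Dirichlet condition $\gammazps\ut=0$ only yields $L\rho+\uincg=0$ directly, which is the EFIE, not the MFIE, so one cannot read off \eqref{eq:MFIE} without an extra argument. I would proceed by uniqueness: let $\rho^{\star}$ be the (unique, by the first paragraph) solution of the MFIE; by the second paragraph $\Lop\rho^{\star}+\uinc$ solves \eqref{eqEqInt:ProblemeUt}, so it coincides with $\ut$ by Theorem \ref{theo:UniqueSolution}, whence $\Lop(\rho-\rho^{\star})=0$ in $\Omegaps$. Taking the exterior trace gives $L(\rho-\rho^{\star})=0$, and once $L$ is injective we conclude $\rho=\rho^{\star}$, so $\rho$ solves the MFIE. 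The point is that $L$ is injective exactly when $k\notin F_D(\Omegam)$ (Theorem \ref{theo:FreqIr}); thus this direction silently invokes the \emph{Dirichlet} irregular frequencies although \eqref{eq:MFIE} only mentions the Neumann ones. When $k\in F_D(\Omegam)$ the single-layer density is no longer unique and, in the spirit of the remark following the EFIE, the ambiguity leaves the scattered field $\ut$ untouched but breaks the literal equivalence. I would therefore either strengthen the hypothesis to $k\notin F_{DN}(\Omegam)$, or state the converse only for the physical density $\rho=-\gammaups\ut$, for which both the EFIE and the MFIE hold simultaneously.
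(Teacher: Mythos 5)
Your argument is essentially correct, but note that the paper itself gives no proof of Proposition \ref{prop:MFIE}: it explicitly defers the proofs of Propositions \ref{prop:EFIE}--\ref{prop:CFIE} to \cite{BenFar07} and \cite{Thi11}, so there is nothing in-paper to compare against. What you wrote is the standard argument one finds in those references: your injectivity proof for $\MFIED=\frac{1}{2}I+N$ (interior Neumann uniqueness, then exterior Dirichlet uniqueness, then the normal-trace jump $\gammaum\Lop\rho-\gammaups\Lop\rho=\rho$) and the implication ``$\rho$ solves \eqref{eq:MFIE} $\Rightarrow$ the fictitious field $\utm$ vanishes $\Rightarrow$ $\gammazps\ut=0$'' are both correct and exactly in the spirit of the paper's construction of the direct equations via the operator $A=\gammaum$.

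Your reservation about the converse is well taken and points at a genuine (if standard) imprecision in the statement as written, not at a flaw in your proof. Indeed, if $k\in F_D(\Omegam)\setminus F_N(\Omegam)$ and $\rho_0\neq 0$ lies in the kernel of $L$, then $\Lop\rho_0$ vanishes in $\Omegaps$ but restricts to a nontrivial Dirichlet eigenfunction in $\Omegam$, so $\bigl(\frac{1}{2}I+N\bigr)\rho_0=\gammaum\Lop\rho_0\neq 0$ (otherwise the eigenfunction would have vanishing Cauchy data); adding $\rho_0$ to the MFIE solution leaves $\Lop\rho+\uinc$ equal to the scattered solution in $\Omegaps$ but destroys \eqref{eq:MFIE}. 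Hence the literal ``only if'' direction requires either $k\notin\FDN$ or the convention that $\rho$ denotes the physical density $-\dn\ut|_{\Gamma}$, exactly as you propose. This is consistent with how the paper actually uses the result: Proposition \ref{prop:eqA} and everything from Section \ref{sec:SingleScat} onward assume $k\notin\FDN$, so your strengthened hypothesis is the one the paper effectively works under. Your proof, with that amendment made explicit, is complete.
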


\begin{remark}
For every irregular frequency $k$ of $F_{N}(\Omegam)$, the operator $\MFIED$ is no more one-to-one. In that case and contrary to the EFIE, the single-layer potential $\Lop\rhot + \uinc$ based on a solution $\rhot$ of the MFIE is not guaranteed to be the solution of the scattering problem (\ref{eqEqInt:ProblemeUt}).
\end{remark}

\subsubsection{CFIE (\textit{Combined Field Integral Equation})}

To avoid the irregular frequencies problem, Burton and Miller \cite{BurMil70} considered a linear combination of the EFIE and the MFIE by imposing a Fourier-Robin boundary condition to $\utm$ on $\Gamma$:
$$
A = (1-\alpha) \gammaum + \alpha \eta \gammazm,
$$
with
\begin{equation}\label{eq:condAlphaEta}
0 <\alpha <1 \qquad\text{ and } \qquad \Im(\eta) \neq 0,
\end{equation}
where $\Im(\eta)$ is the imaginary part of the complex number $\eta$. 
Hence, the boundary integral equation (\ref{eqEqInt:EqInt}) reads as
\begin{equation}\label{eq:CFIE}
\CFIED \rho = - \left[ (1-\alpha) \dn\uinc|_\Gamma + \alpha \eta \uinc|_\Gamma \right].
\end{equation}
This \emph{Combined Field Integral Equation} (CFIE, denomination of Harrington and Mautz \cite{HarMau78} in electromagnetism) or Burton-Miller integral equation \cite{BurMil70} is well-posed for every frequency $k$.
\begin{prop}\label{prop:CFIE}
For any $k>0$ and for any couple $\alpha$ and $\eta$ satisfying condition (\ref{eq:condAlphaEta}), the single-layer potential $\Lop\rho+\uinc$ is the solution of the scattering problem \ssi $\rho$ is the solution of the CFIE (\ref{eqEqInt:ProblemeUt}). 
\end{prop}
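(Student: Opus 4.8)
The plan is to prove the two assertions contained in the statement: that the CFIE operator $\CFIED$ is boundedly invertible on $L^2(\Gamma)$ for \emph{every} $k>0$ (well-posedness), and that its solution reproduces the scattering field. Expanding $A=(1-\alpha)\gammaum+\alpha\eta\,\gammazm$ with the trace relations \eqref{eqEqInt:trace}, I would first record that
$$
\CFIED=(1-\alpha)\left(\tfrac12 I+N\right)+\alpha\eta\,L,
$$
so that $\CFIED\rho=A\Lop\rho$. The strategy for invertibility is the classical Fredholm-plus-injectivity scheme: show $\CFIED$ is a nonzero multiple of the identity plus a compact operator, then show it is one-to-one for all $k$, the latter being the Burton--Miller argument that exploits $\Im(\eta)\neq0$.

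The central tool, and the step I expect to be the main obstacle, is a uniqueness lemma for the interior impedance problem: if $w$ solves $\Delta w+k^2w=0$ in $\Omegam$ with $Aw:=(1-\alpha)\gammaum w+\alpha\eta\,\gammazm w=0$ on $\Gamma$, then $w\equiv0$. To prove it I would take $w$ as the test function in the definition \eqref{eq:gammaupm} of the normal trace; using $\Delta w=-k^2w$ this shows that $\int_\Gamma \gammaum w\,\overline{\gammazm w}\,\dd\Gamma$ equals the real quantity $\pm\big(\int_{\Omegam}|\nabla w|^2-k^2\int_{\Omegam}|w|^2\big)$. Substituting $\gammaum w=-\tfrac{\alpha\eta}{1-\alpha}\gammazm w$ from the boundary condition (legitimate since $\alpha<1$) gives
$$
\int_\Gamma \gammaum w\,\overline{\gammazm w}\,\dd\Gamma=-\frac{\alpha\eta}{1-\alpha}\int_\Gamma|\gammazm w|^2\,\dd\Gamma .
$$
Taking imaginary parts, and using $\tfrac{\alpha}{1-\alpha}>0$ together with $\Im(\eta)\neq0$ from \eqref{eq:condAlphaEta}, forces $\gammazm w=0$, hence also $\gammaum w=0$. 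Thus $w$ has vanishing Cauchy data on $\Gamma$, and Holmgren's uniqueness theorem applied to the Helmholtz equation (elliptic with analytic coefficients) yields $w\equiv0$ in the connected set $\Omegam$. The delicate points here are the correct sign bookkeeping in \eqref{eq:gammaupm} and the justification of the unique-continuation step.

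With this lemma, bijectivity follows. Since $\Gamma$ is smooth, $N=-M^*$ and $L$ have weakly singular, smoothing kernels and are compact on $L^2(\Gamma)$; writing $\CFIED=\tfrac{1-\alpha}{2}I+\big[(1-\alpha)N+\alpha\eta L\big]$ exhibits it as (nonzero scalar)$\times$identity plus compact, hence Fredholm of index zero. For injectivity I take $\rho\in\ker\CFIED$ and set $v=\Lop\rho$, which by Proposition \ref{propEqInt:potentiel} solves Helmholtz in $\Rb^d\setminus\Gamma$ and is outgoing; then $\CFIED\rho=0$ means $Av=0$, so the lemma gives $v\equiv0$ in $\Omegam$. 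By continuity of the single-layer trace $\gammazps v=\gammazm v=0$, so $v$ is an outgoing solution of the exterior homogeneous Dirichlet problem and vanishes in $\Omegaps$ by Theorem \ref{theo:UniqueSolution}. The normal-derivative jump relation read from \eqref{eqEqInt:trace}, namely $\rho=\gammaum\Lop\rho-\gammaups\Lop\rho=\gammaum v-\gammaups v$, then gives $\rho=0$. Fredholmness of index zero plus injectivity yields bijectivity, and the open mapping theorem gives the bounded inverse; this holds for every $k>0$.

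Finally I would establish the equivalence. If $\rho$ solves the CFIE, the interior field $\utm=\Lop\rho+\uinc$ solves Helmholtz in $\Omegam$ and satisfies $A\utm=0$ (precisely the CFIE, since the right-hand side is $-A\uinc$); the lemma gives $\utm\equiv0$, hence $\gammazm\utm=0$. Because $\uinc$ is smooth across $\Gamma$ and the single-layer trace is continuous, $\gammazps(\Lop\rho+\uinc)=L\rho+\uinc|_\Gamma=\gammazm\utm=0$, which is exactly the Dirichlet condition, and Proposition \ref{propEqInt:potentiel} supplies the Helmholtz equation and the outgoing behaviour, so $\ut=\Lop\rho+\uinc$ solves \eqref{eqEqInt:ProblemeUt}. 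Conversely, since both the CFIE and the scattering problem \eqref{eqEqInt:ProblemeUt} are uniquely solvable, the single-layer density $\rho$ solving the CFIE is exactly the one whose potential $\Lop\rho+\uinc$ is the (unique) scattering field; the physical density is recovered as $\rho=-\gammaups\ut$ through the vanishing of $\utm$. Note that, unlike the EFIE and MFIE cases, this argument is uniform in $k$ because the interior \emph{impedance} problem is uniquely solvable for all $k>0$, which is the whole point of the combined formulation.
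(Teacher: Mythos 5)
The paper does not actually prove Proposition \ref{prop:CFIE}: it states that the proofs of Propositions \ref{prop:EFIE}--\ref{prop:CFIE} are omitted and refers to \cite{BenFar07} and \cite{Thi11}. Your argument is the classical Burton--Miller scheme, and its substantive content is correct: the identity $\CFIED=\frac{1-\alpha}{2}I+\left[(1-\alpha)N+\alpha\eta L\right]$ gives Fredholmness of index zero; the interior impedance uniqueness lemma (imaginary parts in Green's identity, which is precisely where $0<\alpha<1$ and $\Im(\eta)\neq0$ are used, followed by unique continuation from vanishing Cauchy data) gives injectivity via the interior-then-exterior uniqueness chain and the normal-derivative jump $\rho=\gammaum\Lop\rho-\gammaups\Lop\rho$; and the forward implication ``$\rho$ solves the CFIE $\Rightarrow\Lop\rho+\uinc$ solves \eqref{eqEqInt:ProblemeUt}'' is correctly derived for every $k>0$.

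The one step that does not hold up is the converse, which you dispatch in a single sentence by invoking uniqueness of the two problems. That only restates the forward direction (the CFIE solution produces the scattered field); it does not show that \emph{every} $\rho$ for which $\Lop\rho+\uinc$ equals the scattered field solves the CFIE. The missing ingredient is injectivity of the map $\rho\mapsto\Lop\rho|_{\Omegaps}$, whose kernel is exactly $\ker L$. For $k\notin F_D(\Omegam)$ this kernel is trivial and the converse follows at once; but for $k\in F_D(\Omegam)$ one may add to the CFIE solution a nonzero $\rho_0\in\ker L$ without changing the exterior potential, while $\CFIED\rho_0=(1-\alpha)\rho_0\neq0$ (since $L\rho_0=0$ forces $\Lop\rho_0=0$ in $\Omegaps$, hence $\gammaups\Lop\rho_0=0$ and $\gammaum\Lop\rho_0=\rho_0$). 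So the ``only if'' direction, read literally for all $k>0$, fails at interior Dirichlet eigenfrequencies. This is really an imprecision of the statement rather than of your strategy, and it is harmless for the rest of the paper, which assumes $k\notin\FDN$ throughout; to close your proof you should either add the hypothesis $k\notin F_D(\Omegam)$ for the converse or rephrase it as the assertion that the unique CFIE solution is the physical density $-\dn\ut|_\Gamma$.
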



\section{Single scattering preconditioned direct integral equations}\label{sec:SingleScat}

\subsection{Integral equation associated to $A$}

Previous sections show that depending on the choice of the operator $A$, the boundary integral equation (\ref{eqEqInt:EqInt}) will be the EFIE (trace), the MFIE (normal trace) or the CFIE (linear combination). Therefore, in order to merge the notations, only the boundary integral equation (\ref{eqEqInt:EqInt}) will be considered, which can be rewritten as
\begin{equation}\label{eq:A}
\LA \rho = -A\uinc, \qquad \text{ with } \LA \rho = A\Lop\rho. 
\end{equation}
In this paper, this integral equation is called \emph{integral equation associated to $A$} or, in short, \emph{integral equation $A$}. It represents one of the three direct integral equation EFIE, MFIE or CFIE, depending on the choice of $A$.
As an example, for the EFIE, the operator $A$ is the interior trace $\gammazm$ and $\LA = \gammazm\Lop = L$ is the boundary single-layer integral operator. 
According to Properties \ref{prop:EFIE}, \ref{prop:MFIE} and \ref{prop:CFIE}, the integral equation associated to $A$ is uniquely solvable if 
$k\not\in \FDN$.
\begin{prop}\label{prop:eqA}
Let us assume that $k\not\in F_{DN}(\Omegam)$ and that $A$ is whether the interior trace $\gammazm$, the interior normal trace $\gammaum$ or a linear combination  $(1-\alpha)\gammaum +\alpha\eta\gammazm$ such that $\alpha$ and $\eta$ satisfy relation (\ref{eq:condAlphaEta}). The quantity $\Lop\rho+\uinc$ is then the unique solution of the scattering problem (\ref{eqEqInt:ProblemeUt}) \ssi
 $\rho$ is the unique solution of the integral equation $A$ (\ref{eq:A}). 
%
\end{prop}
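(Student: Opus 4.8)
The plan is to recognize that this proposition merely consolidates the three well-posedness results already obtained, Propositions~\ref{prop:EFIE}, \ref{prop:MFIE} and \ref{prop:CFIE}, under the single hypothesis $k\not\in F_{DN}(\Omegam)$. I would therefore argue by a case analysis on the operator $A$, after first unpacking the definition of the set of irregular frequencies.

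First I would use the definition \eqref{eq:FDN}, namely $F_{DN}(\Omegam)=F_D(\Omegam)\cup F_N(\Omegam)$, so that the assumption $k\not\in F_{DN}(\Omegam)$ is equivalent to requiring $k\not\in F_D(\Omegam)$ \emph{and} $k\not\in F_N(\Omegam)$ at once; this is the observation that lets one hypothesis cover all three formulations. Then I treat each admissible $A$ separately. If $A=\gammazm$, the trace relations \eqref{eqEqInt:trace} give $\LA=\gammazm\Lop=L$, equation \eqref{eq:A} is exactly the EFIE \eqref{eq:EFIE}, and Proposition~\ref{prop:EFIE} applies because $k\not\in F_D(\Omegam)$. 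If $A=\gammaum$, then $\LA=\gammaum\Lop=\MFIED$, equation \eqref{eq:A} is the MFIE \eqref{eq:MFIE}, and Proposition~\ref{prop:MFIE} applies because $k\not\in F_N(\Omegam)$. If $A=(1-\alpha)\gammaum+\alpha\eta\gammazm$ with $\alpha,\eta$ satisfying \eqref{eq:condAlphaEta}, then $\LA=\CFIED$, equation \eqref{eq:A} is the CFIE \eqref{eq:CFIE}, and Proposition~\ref{prop:CFIE} applies for any $k>0$, hence in particular here. In every case the equivalence ``$\Lop\rho+\uinc$ solves \eqref{eqEqInt:ProblemeUt}'' \ssi ``$\rho$ solves \eqref{eq:A}'' is then read off directly from the corresponding proposition.

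It then remains to match the word \emph{unique} appearing on both sides of the statement. On the scattering side this is Theorem~\ref{theo:UniqueSolution}. On the density side, uniqueness of $\rho$ comes from the invertibility of $\LA$: for the EFIE and the MFIE this is Theorem~\ref{theo:FreqIr}, which makes $L$ and $\MFIED$ isomorphisms precisely when $k\not\in F_D(\Omegam)$, respectively $k\not\in F_N(\Omegam)$, while for the CFIE it is the Burton--Miller well-posedness contained in Proposition~\ref{prop:CFIE}. Thus in each case $\rho=-\LA^{-1}A\uinc$ is the unique density, and the map $\rho\mapsto\Lop\rho+\uinc$ identifies it with the unique scattering solution. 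I do not anticipate any genuine difficulty here: the content is entirely inherited from the earlier propositions, and the only points demanding attention are the correct routing of the two frequency conditions through \eqref{eq:FDN} and the verification that the two notions of uniqueness are compatible, namely that the unique $\rho$ produced by invertibility is sent to the unique scattering solution, and conversely.
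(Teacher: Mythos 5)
Your proposal is correct and matches the paper's own treatment: the paper offers no separate proof, merely noting that the result follows ``according to Properties \ref{prop:EFIE}, \ref{prop:MFIE} and \ref{prop:CFIE}'', which is exactly your case analysis on $A$ combined with the decomposition of $F_{DN}(\Omegam)$ from (\ref{eq:FDN}). Your additional care about matching the two notions of uniqueness (via Theorems \ref{theo:UniqueSolution} and \ref{theo:FreqIr}) is a reasonable elaboration of the same argument, not a different route.
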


In what follows, irregular frequencies will be avoided by always assuming that $k\not\in\FDN$.

\subsection{Multiple scattering case}

The domain $\Omegam$ is now supposed to be a collection of $M$ disjoint bounded open sets $\Omegam_{p}$ of $\Rb^{d}$, $p=1,\ldots, M$, such that every domain $(\Rb^{d}\setminus\overline{\Omegam_{p}})$ is connected, as this is the case for the propagation domain $\Omegaps = \Rb^{d}\setminus\overline{\Omegam}$. 
In this paper, single scattering designates scattering in a medium containing only one scatterer whereas multiple scattering is used for a medium containing more than one obstacle. This article being focused on the multiple scattering case, $M$ will be assumed to satisfy $M\geq 2$.

As $\Omegam$ is composed of $M$ disjoint obstacles $\Omegamp$, $p=1,\ldots,M$, the single-layer volume integral operator $\Lop$ can be written as the sum of $M$ operators $\Lop_{q}$, $q=1,\ldots,M$, defined by
\begin{equation}\label{eqEqInt:Lopq}
\begin{array}{r c c  l}
\Lop_{q} : &   H^{-1/2}(\Gammaq) & \longrightarrow& H^{1}_{loc}(\Rb^{d}) \\
 & \rhoq &\longmapsto& \dsp{\Lopq\rhoq, \qquad\forall\xx\in\Rb^{d}, \quad \Lopq\rhoq(\xx) = \int_{\Gammaq} G(\xx,\yy)\rhoq(\yy)\;\dd\yy}.
 \end{array}
\end{equation}
Therefore the single-layer potential can be decomposed as follows
\begin{equation}\label{eq:decomposeLop}
\forall\rho\in H^{-1/2}(\Gamma),\qquad \Lop\rho = \sum_{q=1}^{M}\Lopq\rhoq, \qquad \text{ with } \rhoq = \rho|_{\Gammaq}.
\end{equation}
Now, for every $p=1,\ldots,M$, let $\Ap$ be the restriction of the operator $A$ to $\Gammap$:
\begin{equation}\label{eq:Ap}
\forall g \in H^{1}(\Omegam), \qquad 
\Ap g = (Ag)|_{\Gammap},
\end{equation}
With these notations, the integral equation $A$ (\ref{eq:A}) satisfied by $\rho$ can be written equivalently as a system of $M$ integral equations
$$
\forall p=1,\ldots,M, \qquad \Ap\Lop\rho = -\Ap\uinc,
$$
or, using the decomposition \eqref{eq:decomposeLop}, as
\begin{equation}\label{eqEqInt:EqIntA1}
\forall p=1,\ldots,M, \qquad \sum_{q=1}^{M}\Ap\Lopq\rhoq = -\Ap\uinc.
\end{equation}
Finally, by introducing the operators $\LApq$, for $p,q=1,\ldots,M$, defined by 
\begin{equation}\label{eqEqInt:OpLApq}
\forall \rhoq\in H^{-1/2}(\Gammaq),\qquad \LApq\rhoq = \Ap(\Lopq \rhoq),
\end{equation}
then the $M$ integral equations (\ref{eqEqInt:EqIntA1}) can be written in the following matrix form
$$
\left[\begin{array}{c c c c}
\LAuu & \LAud & \ldots & \LAuM \\
\LAdu & \LAdd & \ldots & \LAdM \\
\vdots & \vdots & \ddots & \vdots \\
\LAMu & \LAMd & \ldots & \LAMM \\
\end{array}\right]
\left[\begin{array}{c}
\rho_1 \\
\rho_{2} \\
\vdots \\
\rho_{M} \\
\end{array}\right]
= 
- \left[\begin{array}{c}
A_{1}\uinc \\
A_{2}\uinc \\
\vdots \\
A_{M}\uinc \\
\end{array}\right].
$$
Recall that this equation is just a matrix form of the integral equation associated to $A$ (\ref{eq:A}).
For now on, the operator $\LA$ will be identified to its associated matrix $(\LApq)_{1\leq p,q\leq M}$.

\subsection{Single scattering operator and preconditioned integral equation $A$}

Let the \emph{single scattering operator} $\LAsgl$, corresponding to the diagonal part of the operator $\LA$, be defined by:
\begin{equation}\label{eq:LAsgl}
\LAsgl = \left[\begin{array}{c c c c}
\LAuu & 0 & \ldots & 0 \\
0 & \LAdd & \ldots & 0 \\
\vdots & \vdots & \ddots & \vdots \\
0 & 0 & \ldots & \LAMM \\
\end{array}\right].
\end{equation}
Indeed, each component $\LApp$ of $\LAsgl$ represents the self-interaction of the scatterer $\Omegamp$. More precisely, if the medium contains only one obstacle $\Omegamp$, with $p\in\{1,\ldots, M\}$, then this equality would hold true $\LA = \LApp$.

As the wavenumber $k$ is assumed not to be an irregular frequency of $\FDN$, then $k$ does also not belong to $F_{DN}(\Omegamp)$, for all $p=1,\ldots,M$, thanks to relations (\ref{eq:FDOmegamp}) and (\ref{eq:FDN}). 
Consequently and due to proposition \ref{prop:eqA}, for $p=1,\ldots, M$, the operator $\LApp$ associated to the single scattering is invertible. Thus, the single scattering operator $\LAsgl$ is also invertible with inverse operator
$$
\LAsgl^{-1} = \left[\begin{array}{c c c c}
(\LAuu)^{-1} & 0 & \ldots & 0 \\
0 & (\LAdd)^{-1} & \ldots & 0 \\
\vdots & \vdots & \ddots & \vdots \\
0 & 0 & \ldots & (\LAMM)^{-1} \\
\end{array}\right].
$$
The integral equation associated to $A$ is now preconditioned 
by $\LAsgl$ 
which gives rise to
\emph{the preconditioned integral equation associated to $A$} (or in short \emph{the preconditioned integral equation $A$}):
\begin{equation}\label{eqEqINt:eqAprecond}
\LAsgl^{-1}\LA\rho = -\LAsgl^{-1}A\uinc,
\end{equation}
where the operator $\LAsgl^{-1}\LA$ has the following matrix form
\begin{equation}\label{eqEqINt:LAsglLA}
\LAsgl^{-1} \LA = \left[\begin{array}{c c c c}
I & (\LAuu)^{-1}\LAud & \ldots & (\LAuu)^{-1}\LAuM \\
(\LAdd)^{-1}\LAdu & I & \ldots & (\LAdd)^{-1}\LAdM \\
\vdots & \vdots & \ddots & \vdots \\
(\LAMM)^{-1}\LAMu & (\LAMM)^{-1}\LAMd & \ldots & I \\
\end{array}\right].
\end{equation}
Note that this preconditioning accelerates the convergence rate of an iterative solver, like the GMRES, as illustrated by the numerical example given in Section \ref{sec:num}.

\subsection{Equality of the preconditioned direct integral equations}

This section contains the main result of the paper, that is the equality between the three direct preconditioned integral equations. In other words, 
here is shown 
that the preconditioned integral equations associated to $A$ is independent of the choice of the operator $A$. As far as the author knows, this surprising result is new.

To prove this, a second ``general'' integral equation, called \emph{integral equation associated to $B$}, with $B\neq A$, is introduced and preconditioned in the same way as the integral equation associated to $A$:
\begin{equation}\label{eqEqINt:eqBprecond}
\LBsgl^{-1}\LB\rho = -\LBsgl^{-1}B\uinc,
\end{equation}
where $\LB = B\Lop$ and the operator $B$ is whether the interior trace $\gammazm$, the interior normal trace $\gammaum$ or a linear combination $(1-\alpha)\gammaum + \eta\alpha\gammazm$ with $\alpha$ and $\eta$ satisfying (\ref{eq:condAlphaEta}).
The operators $\LBsgl$ is defined in the same way as the operator $\LAsgl$ (see relation (\ref{eq:LAsgl})), with a formal change of $A$ by $B$. Moreover and similarly to relations (\ref{eq:Ap}) and (\ref{eqEqInt:OpLApq}), the operators $\Bp$ and $\LBpq$, $p,q=1,\ldots,M$, are introduced and defined by 
$$
\forall g\in H^{1}(\Omegam), \quad \Bp g =(Bg)|_{\Gammap}\qquad\text{ and }\qquad\forall\rhoq\in H^{-1/2}(\Gammaq),\quad \LBpq\rhoq = \Bp\Lopq\rhoq.
$$

Remark that, due to proposition \ref{prop:eqA}, for $k\not\in F_{DN}(\Omegam)$, the quantity $\Lop\rho + \uinc$ is the solution of the scattering problem (\ref{eqEqInt:ProblemeUt}) \ssi $\rho$ is the solution of the integral equation associated to $A$ or to $B$. In other words, provided $k\not\in \FDN$, the two integral equations $A$ and $B$ are equivalent and have the same solution.

The aim of this section is to prove that the preconditioned integral equation associated to $A$ (\ref{eqEqINt:eqAprecond}) is exactly the same as the preconditioned integral equation associated to $B$ (\ref{eqEqINt:eqBprecond}). Since the unknown $\rho$ is the same in both equations, it is sufficient to prove that the operators $\LAsgl^{-1}\LA$ and $\LBsgl^{-1}\LB$ are identical. 
More precisely and thanks to their matrix form (\ref{eqEqINt:LAsglLA}), it is sufficient to show that the following equality holds true
\begin{equation}\label{eqEqInt:LApqLBpq}
\forall p,q=1,\ldots,M,\qquad (\LApp)^{-1}\LApq = (\LBpp)^{-1}\LBpq.
\end{equation}
Furthermore, for $p=q$, the above equality is obvious since
$$
(\LApp)^{-1}\LApp = (\LBpp)^{-1}\LBpp = \Ip,
$$
where $\Ip$ is the identity operator on $H^{-1/2}(\Gammap)$.
Therefore, equality (\ref{eqEqInt:LApqLBpq}) needs to be shown only for $p\neq q$. 
Let the following result be first established.
\begin{lemma}\label{lemme:ABpp}
Assuming that $k\not\in F_{DN}(\Omegam)$ then the following equality holds true
$$
\forall p=1,\ldots,M, \qquad (\LApp)^{-1}\Ap = (\LBpp)^{-1}\Bp.
$$
\end{lemma}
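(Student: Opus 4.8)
The plan is to recognise the operator $(\LApp)^{-1}\Ap$ as the map that sends an interior Helmholtz field to the density of the single-layer potential reproducing it inside $\Omegamp$, an object that manifestly does not involve the boundary operator $A$. Throughout, the two sides are understood to act on functions $g$ that solve the Helmholtz equation in $\Omegamp$; this is exactly the setting in which the lemma is later used, since it is applied with $g=\Lopq\rhoq$, a single-layer potential over $\Gammaq$ and hence a smooth Helmholtz solution on the disjoint set $\Omegamp$. (One cannot expect the identity for a general $g$: for $A=\gammazm$, $B=\gammaum$ the two sides reduce to $L_p^{-1}\gammazm g$ and $(-\tfrac12 I+N_p)^{-1}\gammaum g$, which agree only when $\gammazm g$ and $\gammaum g$ are linked by the interior Dirichlet-to-Neumann relation, i.e. only when $g$ solves the Helmholtz equation.)

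First I would introduce $\mathcal{H}_p:=\{v\in H^{1}(\Omegamp;\Delta):\Delta v+k^{2}v=0\}$ and the interior single-layer map $S_p\colon H^{-1/2}(\Gammap)\to\mathcal{H}_p$, $\rho\mapsto(\Lop_{p}\rho)|_{\Omegamp}$, so that by definition $\LApp=\Ap\,S_p$ (reading $\Ap$ as acting on $\mathcal{H}_p$). The structural point is that $S_p$ is a \emph{bijection} onto $\mathcal{H}_p$. Injectivity: if $S_p\rho=0$ then its Dirichlet trace $L_p\rho=\gammazm\Lop_{p}\rho$ vanishes, and $L_p$ is one-to-one because $k\notin F_D(\Omegamp)$ (Theorem \ref{theo:FreqIr}). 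Surjectivity: given $g\in\mathcal{H}_p$, set $\rho=L_p^{-1}(\gammazm g)$; then $\Lop_{p}\rho$ and $g$ are Helmholtz fields in $\Omegamp$ with the same Dirichlet trace, hence equal by interior Dirichlet uniqueness (again $k\notin F_D(\Omegamp)$), i.e. $S_p\rho=g$.

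With both $\LApp=\Ap S_p$ and $S_p$ invertible, $\Ap$ restricted to $\mathcal{H}_p$ is invertible onto the range of $\LApp$, so
\[
(\LApp)^{-1}\Ap=\bigl(\Ap\,S_p\bigr)^{-1}\Ap=S_p^{-1}\,(\Ap)^{-1}\Ap=S_p^{-1}
\qquad\text{on }\mathcal{H}_p ,
\]
and the right-hand side does not involve $A$. Running the identical argument for $B$ gives $(\LBpp)^{-1}\Bp=S_p^{-1}$ as well, whence $(\LApp)^{-1}\Ap=(\LBpp)^{-1}\Bp$. Equivalently, and more transparently, one may argue directly: setting $\rho=(\LApp)^{-1}\Ap g$ and $w=\Lop_{p}\rho-g$, the field $w$ solves the Helmholtz equation in $\Omegamp$ and satisfies the \emph{homogeneous} interior condition $Aw=0$ on $\Gammap$; hence $w\equiv0$, so $\rho$ is characterised by $\Lop_{p}\rho=g$ in $\Omegamp$ independently of $A$.

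The main obstacle is precisely the uniqueness of the homogeneous interior problem $\Delta w+k^{2}w=0$ in $\Omegamp$, $Aw=0$ on $\Gammap$, which must hold for each of the three admissible $A$ under the single hypothesis $k\notin\FDN$. For $A=\gammazm$ and $A=\gammaum$ this is the defining property of $F_D(\Omegamp)$ and $F_N(\Omegamp)$ together with the inclusions coming from (\ref{eq:FDOmegamp})--(\ref{eq:FDN}). The delicate case is the Fourier-Robin operator $A=(1-\alpha)\gammaum+\alpha\eta\gammazm$: here one multiplies by $\overline{w}$, integrates by parts over $\Omegamp$, and uses that $\int_{\Omegamp}(|\nabla w|^{2}-k^{2}|w|^{2})$ is real together with $0<\alpha<1$ and $\Im(\eta)\neq0$ (condition (\ref{eq:condAlphaEta})) to force $\gammazm w=0$, then $\gammaum w=0$, and finally $w\equiv0$ by uniqueness of the Cauchy problem — this is the Burton-Miller mechanism, and is exactly why the CFIE is well posed at every frequency. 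Once interior uniqueness is secured for the relevant $A$, the bijectivity of $S_p$ and the cancellation above are routine.
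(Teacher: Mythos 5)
Your proof is correct, and it reaches the conclusion by a genuinely different (and in one respect more careful) route than the paper. The paper's own proof is \emph{exterior}: it regards $g$ as an incident field for the single-scattering problem off $\Omegamp$, notes that the scattered field is the single-layer potential $\Lopp\rhop$ whose density can be obtained from either the integral equation associated to $A$ or the one associated to $B$, and concludes $(\LApp)^{-1}\Ap g=(\LBpp)^{-1}\Bp g$ from the well-posedness and equivalence statements (Proposition \ref{prop:eqA} applied to one obstacle). Your argument is \emph{interior}: you factor $\LApp=\Ap S_p$ with $S_p$ the single-layer map onto interior Helmholtz fields, prove $S_p$ is a bijection, and cancel to get $(\LApp)^{-1}\Ap=S_p^{-1}$ independently of $A$ — equivalently, that $w=\Lopp\rho-g$ solves the homogeneous interior problem $Aw=0$ and hence vanishes. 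This is self-contained (it re-derives the interior uniqueness for each admissible $A$, including the imaginary-part argument for the Fourier--Robin case, rather than citing Propositions \ref{prop:EFIE}--\ref{prop:CFIE}), and it correctly identifies a point the paper glosses over: the identity only holds on $g$ solving the Helmholtz equation in $\Omegamp$, not on arbitrary $g\in H^1_{loc}(\Rb^d)$ as the paper's ``by virtue of the arbitrariness of $g$'' suggests. This restriction is harmless, since in Theorem \ref{theo:equalityBIE} the lemma is only ever applied to $g=\Lopq\rhoq$ with $q\neq p$ (and to $\uinc$), which are Helmholtz solutions in a neighborhood of $\overline{\Omegamp}$; but your version states the correct hypothesis explicitly, whereas the paper's version implicitly relies on it through the equivalence propositions. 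What the paper's route buys is brevity — everything is delegated to already-stated well-posedness results; what yours buys is precision about the domain of validity and independence from the exterior scattering problem.
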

\begin{proof}
Let $p=1,\ldots, M$ and $g$ be an element of $H^{1}_{loc}(\Rb^{d})$. The single scattering problem associated to the obstacle $\Omegamp$ is to find the scattered field $v$ solution of
\begin{equation}\label{eqPrecond:diffsimple}
\begin{cases}
(\Delta + k^{2}) v = 0 & \text{in } \Rb^{d} \setminus\overline{\Omegamp},\\
v = - g & \text{on } \Gammap,\\
v \text{ outgoing.}
\end{cases}
\end{equation}
The unique solution of this problem is the single-layer potential $\Lopp\rhop$, where $\rhop\in H^{-1/2}(\Gamma_p)$ is indifferently obtained by solving the integral equation associated to $A$
$$
\LApp\rhop = -\Ap g,
$$
or the one associated to $B$
$$
\LBpp\rhop = -\Bp g.
$$
These two equations being well-posed ($k\notin F_{DN}(\Omegam)$), we have
$$
(\LApp)^{-1}\Ap g = (\LBpp)^{-1}\Bp g.
$$
The proof is ended by virtue of the arbitrariness of $g$.
\end{proof}

The main result of this section can now be established.
\begin{theorem}\label{theo:equalityBIE}
If $k\not\in F_{DN}(\Omegam)$, then the operators $\LAsgl^{-1}\LA$ and $\LBsgl^{-1}\LB$ are equal. In other words, the operator $\LAsgl^{-1}\LA$ does not depend on the choice of the operator $A$.
\end{theorem}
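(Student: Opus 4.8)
The plan is to establish the entry-wise identity \eqref{eqEqInt:LApqLBpq}, namely $(\LApp)^{-1}\LApq = (\LBpp)^{-1}\LBpq$ for all $p,q=1,\ldots,M$, since by the matrix form \eqref{eqEqINt:LAsglLA} this is exactly what equality of $\LAsgl^{-1}\LA$ and $\LBsgl^{-1}\LB$ amounts to. As already observed in the preamble to Lemma \ref{lemme:ABpp}, the diagonal entries $p=q$ are trivially equal to $\Ip$, so only the off-diagonal case $p\neq q$ requires work.

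The key structural observation is that both preconditioned off-diagonal blocks factor through the same volume potential $\Lopq$. Indeed, by the very definition \eqref{eqEqInt:OpLApq} of $\LApq$ (and its analogue for $B$), one has $\LApq = \Ap\Lopq$ and $\LBpq = \Bp\Lopq$ as operators on $H^{-1/2}(\Gammaq)$, whence
\[
(\LApp)^{-1}\LApq = \bigl[(\LApp)^{-1}\Ap\bigr]\Lopq
\quad\text{and}\quad
(\LBpp)^{-1}\LBpq = \bigl[(\LBpp)^{-1}\Bp\bigr]\Lopq .
\]
Now I would invoke Lemma \ref{lemme:ABpp}, which asserts precisely that the two bracketed operators coincide, $(\LApp)^{-1}\Ap = (\LBpp)^{-1}\Bp$. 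The only point to check is that I am allowed to feed the argument $\Lopq\rhoq$ into this operator identity: since $\Lopq$ maps $H^{-1/2}(\Gammaq)$ into $H^{1}_{loc}(\Rb^{d})$, each $\Lopq\rhoq$ lies in the space $H^{1}_{loc}(\Rb^{d})$ on which Lemma \ref{lemme:ABpp} is stated, so the identity applies verbatim with $g=\Lopq\rhoq$. Composing the lemma's identity on the right with $\Lopq$ then yields $(\LApp)^{-1}\LApq\rhoq = (\LBpp)^{-1}\LBpq\rhoq$ for every $\rhoq\in H^{-1/2}(\Gammaq)$, and by arbitrariness of $\rhoq$ the two operators agree. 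This proves \eqref{eqEqInt:LApqLBpq} in the remaining case $p\neq q$, and therefore $\LAsgl^{-1}\LA = \LBsgl^{-1}\LB$.

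In short, the whole theorem reduces to the fact that $(\LApp)^{-1}\Ap$ depends only on the obstacle $\Omegamp$ and not on the choice of boundary operator, which is exactly the content of Lemma \ref{lemme:ABpp}; once that single-scatterer fact is available, each off-diagonal block is obtained by composing this $A$-independent operator with the $A$-independent potential $\Lopq$, so no genuinely new difficulty arises at the level of the theorem itself. I expect the only subtlety to be the bookkeeping just mentioned, namely confirming that $\Lopq\rhoq$ lies in the domain $H^{1}_{loc}(\Rb^{d})$ where the lemma's operator equality holds and that the factorisation $\LApq=\Ap\Lopq$ is literally the defining relation \eqref{eqEqInt:OpLApq}, rather than any substantial analytic obstacle, since all the real work (well-posedness of the single-scattering equations and the equivalence of the $A$- and $B$-formulations via Proposition \ref{prop:eqA}) has already been absorbed into Lemma \ref{lemme:ABpp}.
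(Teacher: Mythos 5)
Your proposal is correct and follows exactly the paper's own argument: reduce to the off-diagonal blocks, factor $\LApq=\Ap\Lopq$, and apply Lemma \ref{lemme:ABpp} with $g=\Lopq\rhoq$ before recombining via $\Bp\Lopq=\LBpq$. The only addition is your explicit check that $\Lopq\rhoq$ lies in $H^{1}_{loc}(\Rb^{d})$, which the paper leaves implicit.
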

\begin{proof}
It is sufficient to show that, for $p,q=1,\ldots,M$, with $p\neq q$,
$$
(\LApp)^{-1}\LApq = (\LBpp)^{-1}\LBpq.
$$
Recalling that the operator $\LApq$ is given by $\LApq = \Ap\Lopq$ and using lemma \ref{lemme:ABpp}, it appears that
$$
(\LApp)^{-1}\LApq = (\LApp)^{-1}\Ap\Lopq = (\LBpp)^{-1}\Bp\Lopq.
$$
Therefore, applying the definition $\Bp\Lopq = \LBpq$ gives rise to the sought equality
$$
(\LApp)^{-1}\LApq =(\LBpp)^{-1}\LBpq, \qquad \qquad\forall p,q=1,\ldots,M,
$$
and the proof is ended.
\end{proof}

As a conclusion, let us point out that the preconditioned integral equations $A$ and $B$ have the same operator ($\LAsgl^{-1}\LA = \LBsgl^{-1}\LB$) and the same solution $\rho$. Consequently, their right hand sides are also equal:
$$
- \LAsgl^{-1}\uinc = - \LBsgl^{-1}\uinc.
$$
Thus, the preconditioned integral equation $A$
$$
\LAsgl^{-1}\LA \rho = - \LAsgl^{-1}\uinc,
$$
and the preconditioned integral equation $B$
$$
\LBsgl^{-1}\LB \rho = - \LBsgl^{-1}\uinc,
$$
are exactly the same. As a consequence, preconditioning any direct integral equation EFIE, MFIE or CFIE with its single scattering operator will lead to exactly the same equation. Obviously, the frequency $k$ must not be an irregular one, because in that case the single scattering operator $\LAsgl$ could be no more invertible (especially for the EFIE and the MFIE).
Finally, it should be pointed out that the CFIE is well-posed for every wavenumber $k>0$ and thus, the preconditioned CFIE is also well-posed for all $k>0$.

\begin{remark}
Theorem \ref{theo:equalityBIE} is written for the three direct integral equations EFIE, MFIE and CFIE, but can be easily extended to any other direct boundary integral equation, for which the scattered field is also given by $u = \Lop\rho$, with $\rho = -\dn\ut|_{\Gamma}$. In fact, theorem \ref{theo:equalityBIE} can be proved for any couple of boundary integral equations, provided that they share the same expression of the scattered field and that they are based on the exact same unknown density (\textit{e.g.} single-layer potential of density $\rho= -\dn\ut|_{\Gamma}$ for the direct integral equations). In other words, with these assumptions and after being preconditioned by their single scattering operator, these set of boundary integral equations will become identical to each other.
\end{remark}
\section{Brakhage-Werner indirect integral equation}\label{sec:BW}

The above results are now extended to the Brakhage-Werner indirect integral equation \cite{BraWer65}. In fact, after being preconditioned by its single scattering operator, the Brakhage-Werner integral equation does not lead to the same equation as the one obtained with the direct integral equations. 
However they are similar. 
This section begins by recalling how the Brakhage-Werner integral equation can be obtained and after that the result is established by comparison with the EFIE.

\subsection{Brakhage-Werner indirect integral equation}

This paragraph begins with some notations. The total field $\ut$ is here sought as a linear combination of a single- and a double-layer of density $\psi\in H^{1/2}(\Gamma)$:
$$
\ut = \uinc + \Lop_{BW}\psi,
$$
where the operator $\Lop_{BW}$ of parameter $\eta_{BW}$ is given by
\begin{equation}\label{eq:LopBW}
\forall\xx\in\Rb^{d}\setminus\Gamma, \qquad \Lop_{BW}\psi(\xx) = (-\eta_{BW}\Lop - \Mop)\psi(\xx) = 
\int_{\Gamma}\Big(\dny G(\xx,\yy) - \eta_{BW} G(\xx,\yy)\Big)\psi(\yy)\;\dd\Gamma(\yy),
\end{equation}
with $\Im(\eta) \neq 0$. The integral equation is obtained by applying the exterior trace $\gammazps$ on $\Gamma$ to $\ut$. Indeed, the Dirichlet boundary condition $\gammazps \ut = 0$ and the traces relation (\ref{eqEqInt:trace}) directly give the Brakhage-Werner integral equation solved by $\psi$ 
\begin{equation}\label{eqEqInt:BW}
\LBW \psi = -\uincg,
\end{equation}
with
$$
\LBW = \left( -\eta L -M + \frac{1}{2}I \right).
$$
This second kind integral equation does not suffer from irregular frequency \cite{BraWer65}.
\begin{prop}
For all $k>0$, the quantity $\Lop_{BW}\psi+\uinc$ is the solution of the scattered field (\ref{eqEqInt:ProblemeUt}) \ssi $\psi$ is the solution of the Brakhage-Werner integral equation (\ref{eqEqInt:BW}).
\end{prop}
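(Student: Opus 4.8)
The plan is to split the statement into two independent parts: the \emph{equivalence} (the ``if and only if''), which is a direct trace computation and holds for every $k$, and the \emph{unconditional solvability} (the fact that the correspondence is valid for all $k>0$, i.e. that $\LBW$ admits no irregular frequency), which is the classical Brakhage--Werner uniqueness argument. I would prove the equivalence first and then justify that ``the solution'' is meaningful on both sides by establishing the invertibility of $\LBW$.

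For the equivalence, I would first note that the ansatz $\ut = \uinc + \Lop_{BW}\psi$ satisfies two of the three conditions of \eqref{eqEqInt:ProblemeUt} for \emph{any} $\psi$. Indeed, by proposition \ref{propEqInt:potentiel} both $\Lop\psi$ and $\Mop\psi$ are outgoing solutions of the Helmholtz equation in $\Rb^{d}\setminus\Gamma$, hence so is $\Lop_{BW}\psi=(-\eta\Lop-\Mop)\psi$, while $\uinc$ solves the Helmholtz equation everywhere; thus $\ut$ solves $\Delta\ut+k^{2}\ut=0$ in $\Omegaps$ and $\ut-\uinc=\Lop_{BW}\psi$ is outgoing, independently of $\psi$. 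Only the Dirichlet condition $\gammazps\ut=0$ remains. Taking the exterior trace and inserting the trace relations \eqref{eqEqInt:trace}, namely $\gammazps\Lop\psi=L\psi$ and $\gammazps\Mop\psi=(-\frac{1}{2}I+M)\psi$, gives
\[
\gammazps\Lop_{BW}\psi = -\eta\,\gammazps\Lop\psi - \gammazps\Mop\psi = -\eta L\psi - \left(-\frac{1}{2}I + M\right)\psi = \LBW\psi,
\]
so that $\gammazps\ut = \uincg + \LBW\psi$. Hence $\gammazps\ut=0$ holds if and only if $\LBW\psi=-\uincg$, which is exactly \eqref{eqEqInt:BW}; combined with the uniqueness of the scattering problem (theorem \ref{theo:UniqueSolution}) this proves the equivalence.

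For the ``for all $k>0$'' part I would show that $\LBW=\frac{1}{2}I-\eta L-M$ is invertible for every positive $k$. Since $-\eta L - M$ is compact on $H^{1/2}(\Gamma)$ (for $\Gamma\in C^{2}$), $\LBW$ is Fredholm of index zero, so by the Fredholm alternative it suffices to prove injectivity. Assuming $\LBW\psi=0$, the field $v:=\Lop_{BW}\psi$ has vanishing exterior Dirichlet trace, is outgoing and solves the Helmholtz equation in $\Omegaps$, so $v\equiv 0$ in $\Omegaps$ by theorem \ref{theo:UniqueSolution}; in particular its exterior Cauchy data vanish. The jump relations across $\Gamma$ (single-layer trace continuous, double-layer trace jumping; single-layer normal trace jumping, double-layer normal trace continuous) then express the \emph{interior} Cauchy data as $\gammazm v=-\psi$ and $\gammaum v=-\eta\psi$. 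Applying the Green identity encoded in \eqref{eq:gammaupm} in $\Omegam$ (with $w=v$) yields
\[
\left(\gammaum v,\gammazm v\right)_{H^{-1/2}(\Gamma),\,H^{1/2}(\Gamma)} = \int_{\Omegam}\left(|\nabla v|^{2}-k^{2}|v|^{2}\right)\dd\xx \;\in\; \Rb .
\]
Substituting the interior Cauchy data, the left-hand side equals $\eta\,\|\psi\|^{2}_{L^{2}(\Gamma)}$, whose imaginary part $\Im(\eta)\,\|\psi\|^{2}_{L^{2}(\Gamma)}$ must therefore vanish; since $\Im(\eta)\neq 0$ this forces $\psi=0$, giving injectivity and hence invertibility of $\LBW$ for all $k>0$.

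The main obstacle is this last injectivity step: one must track the signs carefully through the jump relations of the combined potential $\Lop_{BW}\psi$ and through the Green identity \eqref{eq:gammaupm}, and handle the $H^{\pm1/2}(\Gamma)$ duality (under the $L^{2}$ identification of the Remark) so that it is genuinely the imaginary part of $\eta\,\|\psi\|^{2}_{L^{2}(\Gamma)}$ that appears and can be cancelled using $\Im(\eta)\neq 0$. By contrast, the equivalence is routine and the compactness of $L$ and $M$ on the relevant Sobolev spaces, underlying the Fredholm alternative, is standard but should be invoked on the correct spaces.
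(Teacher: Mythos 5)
Your proof is correct: the trace computation giving the equivalence is exactly the derivation the paper sketches just before the proposition, and the injectivity argument for $\LBW$ (Fredholm alternative plus the Green-identity/jump-relation computation forcing $\Im(\eta_{BW})\,\|\psi\|^{2}=0$) is the classical Brakhage--Werner argument that the paper does not reproduce but simply delegates to the citation \cite{BraWer65}. In effect you have supplied the standard proof behind the paper's reference, with the signs in the jump relations and in \eqref{eq:gammaupm} handled correctly.
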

\begin{remark}
Other generalizations of these equations, when $\eta_{BW}$ is an operator, are available for example in \cite{AloBorLev07, AntoineDarbasQJMAM, AntoineDarbasM2AN}
\end{remark}
\begin{remark}\label{rem:BW}
A numerical study concerning the optimal choice of parameter $\eta_{BW}$, appearing in relation (\ref{eq:LopBW}), is proposed in \cite{KreSpa83} in the case of a single spherical or circular obstacle of radius $R$. For a Dirichlet boundary condition, the choice $\eta_{BW} = i/2\max(1/R,k)$ leads to a reasonable condition number of the matrix of the linear system associated to the Brakhage-Werner integral equation, for sufficiently high frequency. 
Recent works have been done on how to choose this parameter for much more general domains, see for example \cite[\S6]{ChaGraLan09} and \cite[\S5.1]{ChaGraLan12} for the case of large $k$ and \cite[\S2.6 and \S2.7]{BetChaGra11} for the case of small frequency $k$. Note also that, according to \cite[Remark 2.24]{ChaGraLan12}, these results apply to both $L_{BW}$ and the CFIE operator, since when $\alpha = 1/2$, these operators are adjoints (up to a factor of $1/2$) in the real $L^2$ inner product.
\end{remark}
As in the previous section, $M$ volume integral operators $\Lop_{BW}^{q}$, for $q=1,\ldots,M$, are introduced and defined for every density $\psiq \in H^{1/2}(\Gammaq)$ by
$$
\forall\xx\in\Rb^{d}\setminus\Gammaq,\qquad\Lop_{BW}^{q} \psiq(\xx) = \int_{\Gammaq}\Big(\dny G(\xx,\yy) - \eta G(\xx,\yy)\Big)\psiq(\yy)\;\dd\Gamma(\yy).
$$
Thus, the potential $\dsp{\Lop_{BW}\psi}$ reads as
$$
\forall \psi\in H^{1/2}(\Gamma), \qquad \Lop_{BW}\psi = \sum_{q=1}^{M} \Lop_{BW}^{q}\psiq, \qquad \text{ with } \psiq = \psi|_{\Gammaq}.
$$
Finally, for $p,q=1,\ldots,M$, the operator $\LBWpq$ is defined by
\begin{equation}\label{eqEqInt:gammapLopBWp}
\forall\psiq\in H^{1/2}(\Gammaq), \qquad \LBWpq \psiq = \left.\left(\Lop_{BW}^{q} \psiq \right)\right|_{\Gammap}.
\end{equation}

The Brakhage-Werner integral equation (\ref{eqEqInt:BW}) can now be written in the following matrix form
\begin{equation}\label{eqEqInt:LBW}
\left[\begin{array}{c c c c}
\LBW^{1,1} & \LBW^{1,2} & \ldots & \LBW^{1,M} \\[0.2cm]
\LBW^{2,1} & \LBW^{2,2} & \ldots & \LBW^{2,M}\\[0.2cm]
\vdots & \vdots & \ddots & \vdots \\[0.2cm]
\LBW^{M,1} & \LBW^{M,2} & \ldots & \LBW^{M,M} \\
\end{array}\right]
\left[\begin{array}{c}
\psi_1 \\[0.2cm]
\psi_{2} \\[0.2cm]
\vdots \\[0.2cm]
\psi_{M} \\
\end{array}\right]
= 
- \left[\begin{array}{c}
\uinc|_{\Gamma_{1}} \\[0.2cm]
\uinc|_{\Gamma_{2}} \\[0.2cm]
\vdots \\[0.2cm]
\uinc|_{\Gamma_{M}} \\
\end{array}\right],
\end{equation}
and the single scattering operator $\LBWsgl$ of the Brakhage-Werner integral equation reads as
$$
\LBWsgl = 
\left[\begin{array}{c c c c}
\LBW^{1,1} & 0 & \ldots & 0 \\[0.2cm]
0 & \LBW^{2,2} & \ldots & 0\\[0.2cm]
\vdots & \vdots & \ddots & \vdots \\[0.2cm]
0 & 0 & \ldots & \LBW^{M,M} \\
\end{array}\right].
$$
Because of the well-posedness of the Brakhage-Werner integral equation for every frequency $k$, each operator $\LBW^{p,p}$ is invertible.
Thus, the operator $\LBWsgl$ is invertible with inverse
$$
\LBWsgl^{-1} = 
\left[\begin{array}{c c c c}
(\LBW^{1,1})^{-1} & 0 & \ldots & 0 \\
0 & (\LBW^{2,2})^{-1} & \ldots & 0\\
\vdots & \vdots & \ddots & \vdots \\
0 & 0 & \ldots & (\LBW^{M,M})^{-1} \\
\end{array}\right].
$$
The Brakhage-Werner integral equation (\ref{eqEqInt:BW})  can now be composed on its left by the operator $\LBWsgl^{-1}$ to obtain the \emph{preconditioned Brakhage-Werner integral equation}
\begin{equation}\label{eqEqInt:BWDiag}
\LBWsgl^{-1}\LBW\psi = -\LBWsgl^{-1}(\uincg).
\end{equation}

\subsection{EFIE}

Let the EFIE, given by equation (\ref{eq:EFIE}), be considered again.
For this integral equation, the total field $\ut$ is written as a single-layer potential with density $\rho\in H^{-1/2}(\Gamma)$:
$$
\ut  =\uinc + \Lop\rho.
$$
The EFIE can be obtained through (at least) two possibilities. The first one, considered in the previous section, consists in introducing a fictitious interior field $\utm = \Lop\rho + \uinc$ in $\Omegam$ and applying to it a homogeneous Dirichlet boundary condition on $\Gamma$. Another possibility is to apply directly the Dirichlet boundary condition $\ut|_{\Gamma}  = 0$ to the quantity $\uinc + \Lop\rho$. Thanks to the continuity of the single-layer potential through $\Gamma$ (Proposition \ref{propEqInt:trace}), this gives directly the electric field integral equation:
\begin{equation}\label{eqEqInt:EFIEbis}
L\rho = - \uincg.
\end{equation}
As a consequence, both the EFIE and the Brakhage-Werner integral equation are obtained through a direct application of the exterior trace $\gammazps$ to the total field $\ut$. This common point is the key to prove that, after being preconditioned by their single scattering operator, the EFIE and the Brakhage-Werner integral equation are 
similar.
In what follows, the wavenumber $k$ is assumed to satisfy $k\not\in\FD$, which implies that the operator $L$ is invertible. 
The volume single-layer integral operator $\Lop$ is now decomposed into $M$ operators $(\Lopq)_{1\leq q\leq M}$, as previously (see relation (\ref{eqEqInt:Lopq})),
which leads to introduce the operators $(\Lpq)_{1\leq p,q\leq M}$ defined by
\begin{equation}\label{eqEqInt:gammapLop}
\forall \rhoq\in H^{-1/2}(\Gammaq),\qquad \Lpq\rhoq = \left.\left(\Lopq\rhoq\right)\right|_{\Gammap}.
\end{equation}
With these notations, the EFIE has the following matrix form
\begin{equation}\label{eqEqInt:L}
\left[\begin{array}{c c c c}
L^{1,1} & L^{1,2} & \ldots & L^{1,M} \\
L^{2,1} & L^{2,2} & \ldots & L^{2,M}\\
\vdots & \vdots & \ddots & \vdots \\
L^{M,1} & L^{M,2} & \ldots & L^{M,M} \\
\end{array}\right]
\left[\begin{array}{c}
\rho_{1} \\
\rho_{2} \\
\vdots \\
\rho_{M} \\
\end{array}\right]
= 
- \left[\begin{array}{c}
\uinc|_{\Gamma_{1}} \\
\uinc|_{\Gamma_{2}} \\
\vdots \\
\uinc|_{\Gamma_{M}} \\
\end{array}\right],
\end{equation}
and its single scattering operator $\Lsgl$ is given by
$$
\Lsgl = 
\left[\begin{array}{c c c c}
L^{1,1} & 0 & \ldots & 0 \\
0 & L^{2,2} & \ldots & 0\\
\vdots & \vdots & \ddots & \vdots \\
0 & 0 & \ldots & L^{M,M} \\
\end{array}\right].
$$
According to theorem \ref{theo:FreqIr}, the operator $\Lsgl$ is invertible. Therefore, the EFIE can be left composed by $\Lsgl^{-1}$ which gives rise to the preconditioned EFIE
\begin{equation}\label{eqEqInt:EFIEDiag}
\Lsgl^{-1}L\rho = -\Lsgl^{-1}\uincg,
\end{equation}
with
$$
\Lsgl^{-1} = 
\left[\begin{array}{c c c c}
(L^{1,1})^{-1} & 0 & \ldots & 0 \\
0 & (L^{2,2})^{-1} & \ldots & 0\\
\vdots & \vdots & \ddots & \vdots \\
0 & 0 & \ldots & (L^{M,M})^{-1} \\
\end{array}\right].
$$

\subsection{Relation between preconditioned EFIE and preconditioned Brakhage-Werner integral equation}

In this section is proved that the operators $\Lsgl^{-1}L$ and $\LBWsgl^{-1}\LBW$ are 
similar. 
To achieve this, the following lemma must first be established.
\begin{lemma}\label{lemmeEqInt:LoppLpp}
When $k\not\in F_{D}(\Omegam)$, then for $p=1,\ldots, M$, the below equality holds true
$$
\Lopp(\Lpp)^{-1} = \Lop_{BW}^{p}(\LBWpp)^{-1}.
$$
\end{lemma}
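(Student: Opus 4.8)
The plan is to run the same single-obstacle uniqueness argument used in the proof of Lemma~\ref{lemme:ABpp}, but now reading it off at the level of the \emph{volume} potentials rather than the boundary equations. Both the single-layer (EFIE) representation and the Brakhage--Werner representation reconstruct, from the same exterior Dirichlet datum, the one and only outgoing field of the isolated obstacle $\Omegamp$; since that reconstructed field is intrinsic, the two reconstruction operators must agree.

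Concretely, I would fix $p\in\{1,\ldots,M\}$ and an arbitrary $g\in H^{1/2}(\Gammap)$, and consider the single scattering problem \eqref{eqPrecond:diffsimple} for the lone obstacle $\Omegamp$ with boundary value $-g$ on $\Gammap$ and the outgoing condition, which by Theorem~\ref{theo:UniqueSolution} (applied with $\Omegamp$ in place of $\Omegam$) admits a unique solution $v$. First I would reconstruct $v$ through the EFIE: writing $v=\Lopp\rhop$ and taking the exterior trace gives $\Lpp\rhop=-g$; since $k\notin F_D(\Omegam)$ forces $k\notin F_D(\Omegamp)$ through \eqref{eq:FDOmegamp}, the operator $\Lpp$ is invertible by Theorem~\ref{theo:FreqIr}, whence $\rhop=-(\Lpp)^{-1}g$ and $v=-\Lopp(\Lpp)^{-1}g$. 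Then I would reconstruct the same $v$ through Brakhage--Werner: writing $v=\Lop_{BW}^{p}\psi_{p}$ and imposing the \emph{exterior} Dirichlet trace gives $\LBWpp\psi_{p}=-g$; the operator $\LBWpp$ is invertible for every $k>0$, so $\psi_{p}=-(\LBWpp)^{-1}g$ and $v=-\Lop_{BW}^{p}(\LBWpp)^{-1}g$. Equating the two expressions for the single field $v$ and letting $g$ range over $H^{1/2}(\Gammap)$ yields the claimed identity $\Lopp(\Lpp)^{-1}=\Lop_{BW}^{p}(\LBWpp)^{-1}$.

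The computations themselves are routine, so the only points that genuinely need care are conventional. First, the restriction to $\Gammap$ defining $\Lpp$ and $\LBWpp$ must be read as the \emph{exterior} trace $\gammazps$, consistently with the way the global operators $L$ and $\LBW$ were derived; for the double-layer part of $\Lop_{BW}^{p}$ this choice is not innocent because of the jump relation in \eqref{eqEqInt:trace}, and picking the interior trace would produce a different operator. Second, and relatedly, uniqueness only controls $v$ on the exterior domain $\Rb^{d}\setminus\overline{\Omegamp}$, so the asserted equality is to be understood there (equivalently, on the propagation domain $\Omegaps$); indeed the two potentials do \emph{not} coincide inside $\Omegamp$. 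This is harmless for the sequel, since the lemma will always be applied after a further restriction to some boundary $\Gamma_{p'}$ with $p'\neq p$, and every such $\Gamma_{p'}$ lies in $\Rb^{d}\setminus\overline{\Omegamp}$. The substantive ingredient, shared with Lemma~\ref{lemme:ABpp}, is simply that the exterior field of a single scatterer is representation-independent, so the operator identity is inherited from uniqueness and not from any direct relation between the densities $\rhop$ and $\psi_{p}$ themselves.
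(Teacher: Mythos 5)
Your proof is correct and is essentially the paper's own argument: both potentials $\Lopp(\Lpp)^{-1}g$ and $\Lop_{BW}^{p}(\LBWpp)^{-1}g$ are outgoing Helmholtz solutions in $\Rb^{d}\setminus\overline{\Omegamp}$ with the same exterior Dirichlet trace on $\Gammap$, so uniqueness for the single-obstacle problem forces them to coincide, and arbitrariness of $g$ gives the operator identity. The only difference is presentational (you solve the two boundary equations first and then form the potentials, whereas the paper writes down the two candidate potentials and verifies they solve the same problem), and your added remarks on the exterior-trace convention and on the equality holding only in the exterior are accurate refinements rather than a new route.
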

\begin{proof}
For $p=1,\ldots,M$, consider the scatterer $\Omegamp$, an element $f_{p}$ of $H^{1/2}(\Gammap)$ and the following single scattering problem of unknown field $v$
\begin{equation}\label{eqEqIntdem:systemp}
\begin{cases}
(\Delta + k^{2})v = 0 &\text{in } \Rb^{d}\setminus\overline{\Omegamp}, \\
v = -f_{p}& \text{on }\Gammap, \\
v \text{ outgoing.}
\end{cases}
\end{equation}
Let $\omega$ and $\omega_{BW}$ be two potentials defined by
$$
\left\{\begin{array}{l}
\omega = \Lopp\left[(\Lpp)^{-1}f_{p}\right],\\[0.2cm]
\omega_{BW} = \Lop_{BW}^{p}\left[(\LBWpp)^{-1}f_{p}\right].
\end{array}\right.
$$
As a linear combination of single- and double-layer potential, these two functions are radiating solution of the Helmholtz equation in $(\Rb^{d}\setminus\overline{\Omegamp})$ (see proposition \ref{propEqInt:potentiel}). Moreover, applying the exterior trace on $\Gammap$ to $\omega$ and $\omega_{BW}$ leads to
$$
\left\{\begin{array}{l}
\omega|_{\Gammap} = -\left. (\Lopp(\Lpp)^{-1}f_{p})\right|_{\Gammap} = -\Lpp(\Lpp)^{-1}f_{p} = -f_{p},\\[0.3cm]
\omega_{BW}|_{\Gammap} = -\left.(\Lop_{BW}^{p}(\LBWpp)^{-1}f_{p})\right|_{\Gammap} = -\LBWpp(\LBWpp)^{-1}f_{p} = -f_{p}.
\end{array}\right.
$$
Thus, the waves $\omega$ and $\omega_{BW}$ are both solutions of the single scattering problem (\ref{eqEqIntdem:systemp}). By unicity (see theorem \ref{theo:UniqueSolution}), it appears that
$$
\forall f_{p} \in H^{1/2}(\Gammap),\qquad \Lopp(\Lpp)^{-1}f_{p} = \Lop_{BW}^{p}(\LBWpp)^{-1}f_{p}.
$$
\end{proof}

A second lemma can now be obtained.
\begin{lemma}\label{lemmeEqInt:LLsgLBWLBWsgl}
For $k\not\in F_{D}(\Omegam)$, the following equality holds true
$$
L\Lsgl^{-1} = \LBW \LBWsgl^{-1}.
$$
\end{lemma}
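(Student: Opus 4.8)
The plan is to reduce the claimed operator identity to its block entries and then invoke Lemma~\ref{lemmeEqInt:LoppLpp}. Since $\Lsgl^{-1}$ and $\LBWsgl^{-1}$ are block-diagonal, the products $L\Lsgl^{-1}$ and $\LBW\LBWsgl^{-1}$ are computed block by block: the $(p,q)$ block of $L\Lsgl^{-1}$ equals $\Lpq\,(L^{q,q})^{-1}$, while the $(p,q)$ block of $\LBW\LBWsgl^{-1}$ equals $\LBWpq\,(\LBW^{q,q})^{-1}$. Hence it suffices to prove, for all $p,q=1,\ldots,M$, the block identity $\Lpq\,(L^{q,q})^{-1} = \LBWpq\,(\LBW^{q,q})^{-1}$.

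The key observation is that each off-diagonal block factors through a volume potential attached to the single obstacle carrying the column index $q$. Indeed, by definition~\eqref{eqEqInt:gammapLop} one has $\Lpq\rhoq = (\Lopq\rhoq)|_{\Gammap}$, so $\Lpq\,(L^{q,q})^{-1}$ is exactly the exterior trace on $\Gammap$ of the volume operator $\Lopq\,(L^{q,q})^{-1}$; likewise, by~\eqref{eqEqInt:gammapLopBWp}, $\LBWpq\,(\LBW^{q,q})^{-1}$ is the trace on $\Gammap$ of $\Lop_{BW}^{q}\,(\LBW^{q,q})^{-1}$. I would then apply Lemma~\ref{lemmeEqInt:LoppLpp} at the index $q$: under the hypothesis $k\not\in F_D(\Omegam)$ it gives the equality $\Lopq\,(L^{q,q})^{-1} = \Lop_{BW}^{q}\,(\LBW^{q,q})^{-1}$ as operators on $\Rb^{d}\setminus\overline{\Omegam_q}$. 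Restricting both sides to $\Gammap$ yields precisely $\Lpq\,(L^{q,q})^{-1} = \LBWpq\,(\LBW^{q,q})^{-1}$, and reassembling the blocks proves the lemma.

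There is essentially no hard analytic step remaining: the whole difficulty has already been packaged into Lemma~\ref{lemmeEqInt:LoppLpp}, which itself rests on the uniqueness of the single-scattering problem (Theorem~\ref{theo:UniqueSolution}). The only points that require care are bookkeeping ones. First, I must keep the two block indices straight, so that the previous lemma is invoked on the obstacle $\Omegam_q$ labelled by the \emph{column} index, not the row index. Second, I should check that the domains and codomains match, so that the compositions $\Lopq\,(L^{q,q})^{-1}$ and $\Lop_{BW}^{q}\,(\LBW^{q,q})^{-1}$ make sense: the inverses $(L^{q,q})^{-1}$ and $(\LBW^{q,q})^{-1}$ exist because $k\not\in F_D(\Omegam)$ guarantees invertibility of the EFIE blocks (via~\eqref{eq:FDOmegamp} and Theorem~\ref{theo:FreqIr}) while Brakhage-Werner is well posed for every $k$. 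Finally, I must note that $\Lopq\,(L^{q,q})^{-1}$ and $\Lop_{BW}^{q}\,(\LBW^{q,q})^{-1}$ coincide as fields on all of $\Rb^{d}\setminus\overline{\Omegam_q}$, and not merely on $\Gammap$, so that taking the trace on the distinct boundary $\Gammap$ is legitimate.
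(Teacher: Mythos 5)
Your proof is correct and follows essentially the same route as the paper: both reduce the identity to the block equality $L^{q,p}(L^{p,p})^{-1} = \LBWqp(\LBWpp)^{-1}$ and obtain it by applying Lemma~\ref{lemmeEqInt:LoppLpp} to the column-index obstacle and then taking the trace on the other boundary. Your explicit remark that the volume potentials agree on all of $\Rb^{d}\setminus\overline{\Omegam_q}$, which is what legitimizes restricting to $\Gammap$, is exactly the point the paper's proof relies on.
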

\begin{proof}
Thanks to the matrix form of the operators (see relations (\ref{eqEqInt:L}) and (\ref{eqEqInt:LBW})), it is more convenient to prove that
\begin{equation}\label{eqEqIntdem:propBW2}
\forall p,q=1,\ldots,M,\qquad L^{q,p}(L^{p,p})^{-1} = \LBWqp(\LBWpp)^{-1}.
\end{equation}
For two indices $1\leq p,q\leq M$, lemma \ref{lemmeEqInt:LoppLpp} implies that
$$
\Lopp(\Lpp)^{-1} = \Lop_{BW}^{p}(\LBWpp)^{-1}.
$$
Applying the exterior trace on $\Gammaq$ to this equality and using relations (\ref{eqEqInt:gammapLop}) and (\ref{eqEqInt:gammapLopBWp}) lead to
$$
L^{q,p}(L^{p,p})^{-1} = \LBWqp(\LBWpp)^{-1},
$$
which ends the proof.
\end{proof}

For now on, $\rho$ denotes the (unique) solution of the EFIE (\ref{eqEqInt:EFIEbis}) and $\psi$ the one of the Brakhage-Werner integral equation (\ref{eqEqInt:BW}). In other words, this means that
$$
L\rho = -\uincg \qquad \text{ and }\qquad\LBW\psi = -\uincg.
$$
Consequently, the densities $\rho$ and $\psi$ are linked through the following relation
$$
\rho = L^{-1}\LBW\psi. 
$$
The next proposition shows that the invertible operator $L^{-1}\LBW$ is the transformation operator between the preconditioned EFIE and the preconditioned Brakhage-Werner integral equation.
\begin{theorem}\label{theo:equalityBW}
For every $k\not\in F_D(\Omegam)$, the operators $\Lsgl^{-1}L$ of the preconditioned EFIE and the operator $\LBWsgl^{-1}\LBW$ of the preconditioned Brakhage-Werner integral equation are similar in the sense that
$$
\left(L^{-1}\LBW\right)^{-1} \Lsgl^{-1}L \left(L^{-1}\LBW\right) = \LBWsgl^{-1} \LBW.
$$
\end{theorem}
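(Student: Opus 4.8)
The plan is to verify the stated identity by direct algebraic manipulation, taking the two preparatory lemmas as given. First I would set $U := L^{-1}\LBW$ and record that $U$ is a well-defined invertible bounded operator: since $k\not\in F_D(\Omegam)$ the operator $L$ is invertible by Theorem \ref{theo:FreqIr}, while $\LBW$ is invertible for every $k>0$ because the Brakhage-Werner equation is well-posed; hence $U^{-1}=\LBW^{-1}L$ exists, and the term \emph{similar} is justified in the precise sense introduced earlier in the paper.

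Next I would simply expand the left-hand side of the claimed equality. Substituting $U^{-1}=\LBW^{-1}L$ and $U=L^{-1}\LBW$ gives
$$
U^{-1}\Lsgl^{-1}L\,U = \LBW^{-1}\,L\,\Lsgl^{-1}\,L\,L^{-1}\,\LBW = \LBW^{-1}\,(L\,\Lsgl^{-1})\,\LBW,
$$
where the only cancellation used is $LL^{-1}=I$. The crucial step is then to invoke Lemma \ref{lemmeEqInt:LLsgLBWLBWsgl}, namely $L\,\Lsgl^{-1}=\LBW\,\LBWsgl^{-1}$, to rewrite the middle factor:
$$
\LBW^{-1}\,(L\,\Lsgl^{-1})\,\LBW = \LBW^{-1}\,\LBW\,\LBWsgl^{-1}\,\LBW = \LBWsgl^{-1}\,\LBW,
$$
which is exactly the right-hand side, completing the proof.

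In truth all the mathematical content resides in the two lemmas, so the theorem itself presents no genuine obstacle; the only thing to watch is the non-commutativity of the operators, which forces one to keep the factors in the correct order and to use Lemma \ref{lemmeEqInt:LLsgLBWLBWsgl} in its one-sided form $L\,\Lsgl^{-1}=\LBW\,\LBWsgl^{-1}$ rather than the superficially similar but unestablished identity $\Lsgl^{-1}L=\LBWsgl^{-1}\LBW$. I would therefore emphasise that the single real ingredient — the identity $L\,\Lsgl^{-1}=\LBW\,\LBWsgl^{-1}$, itself resting on the uniqueness of the single-scattering problem through Lemma \ref{lemmeEqInt:LoppLpp} — is precisely where the geometry enters, and that everything else is bookkeeping of operator order.
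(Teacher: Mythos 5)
Your proposal is correct and is essentially the paper's own argument read in the forward direction: the paper starts from Lemma \ref{lemmeEqInt:LLsgLBWLBWsgl}, left-composes by $\LBW^{-1}$, inserts $I=LL^{-1}$ and right-composes by $\LBW$, which is exactly your expansion of $U^{-1}\Lsgl^{-1}LU$ run in reverse. Your added remarks on the invertibility of $U=L^{-1}\LBW$ and on using the lemma in its one-sided form are accurate but do not change the substance.
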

\begin{proof}
The proof begins 
with the equality given by lemma \ref{lemmeEqInt:LLsgLBWLBWsgl}:
$$
L\Lsgl^{-1} = \LBW\LBWsgl^{-1}.
$$
Composing the above equation on the left by $\LBW^{-1}$ implies that
$$
\left(L^{-1}\LBW\right)^{-1}\Lsgl^{-1} = \LBWsgl^{-1}.
$$
Now, the identity operator $I = LL^{-1}$ is introduced in the left hand side to obtain
$$
\left(L^{-1}\LBW\right)^{-1}\Lsgl^{-1}LL^{-1} = \LBWsgl^{-1}.
$$
Finally, it suffices to compose to the right by the invertible operator $\LBW$ to derive the sought relation
$$
\left(L^{-1}\LBW\right)^{-1}\Lsgl^{-1}L\left(L^{-1}\LBW\right) = \LBWsgl^{-1}\LBW.
$$
\end{proof}

Consequently, the operator of the preconditioned EFIE and the preconditioned Brakhage-Werner integral equation are 
similar.
Due to theorem \ref{theo:equalityBIE}, this result can be extended to the preconditioned MFIE and preconditioned CFIE. 

\begin{remark}
So far, only the indirect integral equation of Brakhage-Werner has been studied. However, theorem \ref{theo:equalityBW} can be extended for any couple of boundary integral equations, provided that they are obtained by applying directly the Dirichlet boundary condition to the total field. This extended result can be proven in the same way as theorem \ref{theo:equalityBW} and summarized as follows for two general integral formulations. For $j=1,2$, let $\Top_{j}$ be a volume integral operator, such as the single-layer operator $\Lop$, the double-layer operator $\Mop$ or any linear combination of them, even involving an invertible operator such as in \cite{AntoineDarbasM2AN}. For $j=1,2$, the total field of the integral formulation numbered by $j$ is sough as $\ut = \Top_{j}\varphi_{j} + \uinc$, where $\varphi_{j}$ is the unknown density, solution of the boundary integral equation $j$:
\begin{equation}\label{eq:Tj}
T_{j}\varphi_{j} = -\gammazps\uinc, \qquad \text{with $T_{j} = \gammazps\Top_{j}$.}
\end{equation}
Hence, if the two integral equations (\ref{eq:Tj}) are uniquely solvable and equivalent to the scattering problem, then
the following equality holds true
$$
(T_{1}^{-1}T_{2})^{-1} \Tsgl_{1}^{-1}T_{1}  (T_{1}^{-1}T_{2}) = \Tsgl_{2}^{-1}T_{2},
$$
where,  for $j=1,2$,  $\Tsgl_{j}$ is the single scattering operator of $T_{j}$ (defined in the same way as above). In other words, the two preconditioned boundary integral equations are 
similar.
\end{remark}

\section{Numerical results}\label{sec:num}

Provided $k$ is not an irregular frequency and after being preconditioned, the EFIE, MFIE, CFIE and the Brakhage-Werner integral equation are equal or similar for the last case. As a consequence, their operator have the same spectrum. Numerically, this also implies that the behavior of an iterative solver will be the same for every integral equation. The numerical results presented in this section aim to illustrate these properties. They also aim to show that the single scattering preconditioner accelerate the convergence rate of the iterative solver. 
In our examples, the boundary integral equation are discretized thanks to the boundary element method, which is briefly described in the first paragraph only for the EFIE, the method being
similar for the other integral equations.
The second paragraph is devoted to the numerical results.

\subsection{Boundary element method: the example of the EFIE}
Recall that the EFIE for the Dirichlet boundary value problem reads as
$$
L\rho = - \uincg.
$$
Let $\Gamma$ be described by a polygonal approximation $\Gamma_{h}$ containing  $N_{h}$ segments.
The largest length of the segments will be denoted by $h$. The finite element approximation space $V_h$ of $L^2(\Gamma_h)$ contains all the continuous  and piecewise linear functions on $\Gamma_h$ ($\mathbb{P}_1$ finite elements):
$$
V_h := \left\{ \rho_h \in \mathcal{C}^0(\Gamma_h) / \rho_h|_{K_j} \in \mathbb{P}_1, 1 \leq j \leq N_h \right\}.
$$
The weak formulation of the EFIE is then given by
$$
\left\{
\begin{array}{l}
\textrm{Find $\rho_h  \in V_h$ such that} \\
\forall \Phi_h \in V_h, \hspace{0.2cm}\PSGammah{L_h\rho_h}{\Phi_h} = 
-\PSGammah{\uinc|_{\Gamma_h}}{\Phi_h}.
\end{array}
\right.
$$
After approximating the double integrals using quadrature formul\ae{}, the weak formulation have then the following matrix form
$$
\left\{
\begin{array}{l}
\textrm{Find $\mathbf{\Rho}_h  \in \mathbb{C}^{N_h}$ such that} \\
\left[L_h \right] \Rho_h= -[M_h]\textbf{u}^{\textrm{inc}},
\end{array}
\right.
$$
where $\left[L_h \right] \in \mathcal{M}_{N_h,N_h}(\mathbb{C})$ is the matrix of the single-layer potential, $\left[M_h \right]
\in \mathcal{M}_{N_h,N_h}(\mathbb{C})$ is the mass matrix for the linear finite elements,
$\Rho_h \in \mathbb{C}^{N_h}$ the nodal vector of the density and $\textbf{u}^{\textrm{inc}}\in \mathbb{C}^{N_h}$ the incident nodal vector. Then, the matrix of the single scattering $[\hat{L}_{h}]$ is computed by extracting and inverting the blocks located on the diagonal of the matrix $[L_{h}]$. Finally, the system approaching the preconditioned EFIE is given by
\begin{equation}\label{eqEqInt:SystFEM2}
\left\{
\begin{array}{l}
\textrm{Find $\mathbf{\Rho}_h  \in \mathbb{C}^{N_h}$ such that} \\
\big[\hat{L}_{h}\big]^{-1}\left[L_h \right] \Rho_h= -\big[\hat{L}_{h}\big]^{-1}[M_h]\textbf{u}^{\textrm{inc}}.
\end{array}
\right.
\end{equation}
The method is exactly the same for the other integral equations. 
In the example that follows, the single scattering preconditioner is obtained using a direct method, in the sense that the matrix of the linear system is fully computed and numerically inverted using a direct solver. However, in practice and as highlighted in the Introduction, the size of the linear system could be too large to compute and invert the matrix (\eg $\big[\hat{L}_{h}\big]$ in equation \eqref{eqEqInt:SystFEM2}), especially at high frequency and/or with a large number of obstacles. In that case, an iterative solver coupled with a fast and low cost matrix-vector product must be used, such as Fast Multipole Method. The single scattering preconditioning could then be applied considering another iterative procedure. Nevertheless and for the sake of clarity, the preconditioner is here still denoted is by $\big[\hat{L}_{h}\big]^{-1}$.

Lastly, computing the single scattering preconditioner involves the inverse of the single scattering matrix ($\big[\hat{L}_{h}\big]$ in the above example). Numerically, this operation is strongly dependent on the conditioning of the matrix and thus of the associated integral equation. Therefore, even if theorems \ref{theo:equalityBIE} and \ref{theo:equalityBW} show that the preconditioned integral equations are exactly the same or similar for the four integral equations, a well conditioned integral equation is still needed. 

		\subsection{Numerical example}

Three different kinds of scatterers are considered: ellipsoidal, rectangular and ``kite-shaped'' (see Figure \ref{figEqInt:FEMKite} for the last shape). 
A total of $30$ obstacles, $10$ from each different shape, are randomly distributed in the box $[0,60]\times[0,60]$ with a random characteristic size of order $1$. 
The distance $b_{pq}$ between the centers of two obstacles $\Omegamp$ and $\Omegamq$ is such that $b_{pq}\geq 3$.
The direction of the incident plane wave is set to $\Beta = (\cos(\pi/2),\sin(\pi/2))$ and the wavenumber $k$ to $20$. Finally, the mesh is generated using $15$ points per wavelength, implying that every obstacle is represented by around $300$ segments. An example of a geometry is shown on Figure \ref{figEqInt:FEM_obstacles}. The parameters $\alpha$ and $\eta$ of the CFIE are chosen with respect to what appears as a reasonable choice \cite[section 5.2.1]{Dar04} (other choices would be more optimal as highlighted in remark \ref{rem:BW}):
$$
\alpha = 0.2 \qquad \text{ and } \qquad \eta = -ik = -20i.
$$
Concerning the Brakhage-Werner integral equation and for simplicity, the parameter $\eta_{BW}$ is set to the optimal choice in the case of single scattering by a unit disk (see remark \ref{rem:BW} for references for other geometries):
$$
\eta_{BW} = i\frac{k}{2} = 10i.
$$

The four boundary integral equations EFIE \eqref{eq:EFIE}, MFIE \eqref{eq:MFIE}, CFIE \eqref{eq:CFIE} and Brakhage-Werner \eqref{eqEqInt:BW} are approximated using boundary element method of degree $1$, leading to four linear systems and four matrices denoted respectively by $\AEFIE$, $\AMFIE$, $\ACFIE$ and $\ABW$. Then, each linear system is preconditioned by its single scattering matrix, which is denoted by a hat symbol (\eg $\AEFIEp$ for $\AEFIE$). With these notation and according to theorems \ref{theo:equalityBIE} and \ref{theo:equalityBW}, the following equalities must hold true
$$
\AEFIEpm\AEFIE= \AMFIEpm\AMFIE = \ACFIEpm\ACFIE,
$$
and
$$
\AEFIEpm\AEFIE = (\AEFIE^{-1}\ABW)\ABWpm\ABW(\AEFIE^{-1}\ABW)^{-1}.
$$
It appears that, indeed, these four matrices are numerically close to each other:
$$
\fracNinf{\AEFIEpm\AEFIE-\AMFIEpm\AMFIE}{\AEFIEpm\AEFIEpm} \leq 4.10^{-2}, \quad
\fracNinf{\AMFIEpm\AMFIE-\ACFIEpm\ACFIE}{\AMFIEpm\AMFIE} \leq 4.10^{-2},
$$
$$
\fracNinf{\AEFIEpm\AEFIE-\ACFIEpm\ACFIE}{\ACFIEpm\ACFIE} \leq  8.10^{-3},
$$
and
$$
\fracNinf{\AEFIEpm\AEFIE - (\AEFIE^{-1}\ABW)\ABWpm\ABW(\AEFIE^{-1}\ABW)^{-1}}{\ABWpm\ABW} \leq 4.10^{-4}.
$$
where $\|\cdot\|_{\infty}$ denotes the infinite matrix norm: $\|A\|_{\infty} = \max_{i} \sum_{j} |a_{i,j}|$ for any complex matrix $A = (a_{ij})$. 
Thus, these results illustrate theorems \ref{theo:equalityBIE} and \ref{theo:equalityBW}. From a practical point of view, these two propositions also imply that the matrices of the four preconditioned integral equations share the exact same spectrum. To observe this, Figures \ref{figEqInt:eigenvalues} and \ref{figEqInt:eigenvalues_zoom} present their eigenvalues in the complex plane. It appears that the four spectra coincide. More precisely, the relative error $\abs{a-b}/\abs{a}$ between two eigenvalues $a$ and $b$ reaches 2.7\% at its maximum. 
Hence, theorems \ref{theo:equalityBIE} and \ref{theo:equalityBW} seem to be numerically satisfied in the sense that the matrices of the preconditioned integral equations are very close to each other 
(up to a change of basis for the Brakhage-Werner integral equation). 

To observe the effect of the preconditioner on an iterative solver,
the eight linear systems (four integral equations and four preconditioned integral equations) are solved using GMRES \cite{SaaSch86} with a restart of $50$ and a tolerance of $10^{-6}$. 
The history of convergence, presented on Figures \ref{figEqInt:FEM_historique} and \ref{figEqInt:FEM_historique_zoom}, show that, first, EFIE and MFIE do no converge without being preconditioned. Adding to this, it appears that the preconditioner accelerates the convergence of the GMRES. Indeed, the number of iterations involved by the GMRES is decreased by $36\%$ and $23\%$ for respectively the CFIE and Brakhage-Werner. This can be a consequence of the cluster of eigenvalues centered on the point $(1,0)$, observed on Figures \ref{figEqInt:eigenvalues} and \ref{figEqInt:eigenvalues_zoom}. However, this paper is not intended to study the impact of the preconditioning on the convergence rate and the choice of the parameters is discussable. Nevertheless, these examples show that, first, the four curves representing the four preconditioned integral equations are superimposed, 
and second that the preconditioner seems to accelerates and, at least, does not deteriorate the convergence rate. 


\begin{figure}[h!]
\centering
\subfigure[Example of a ``kite-shaped'' obstacle]{
\includegraphics[width=7.5cm]{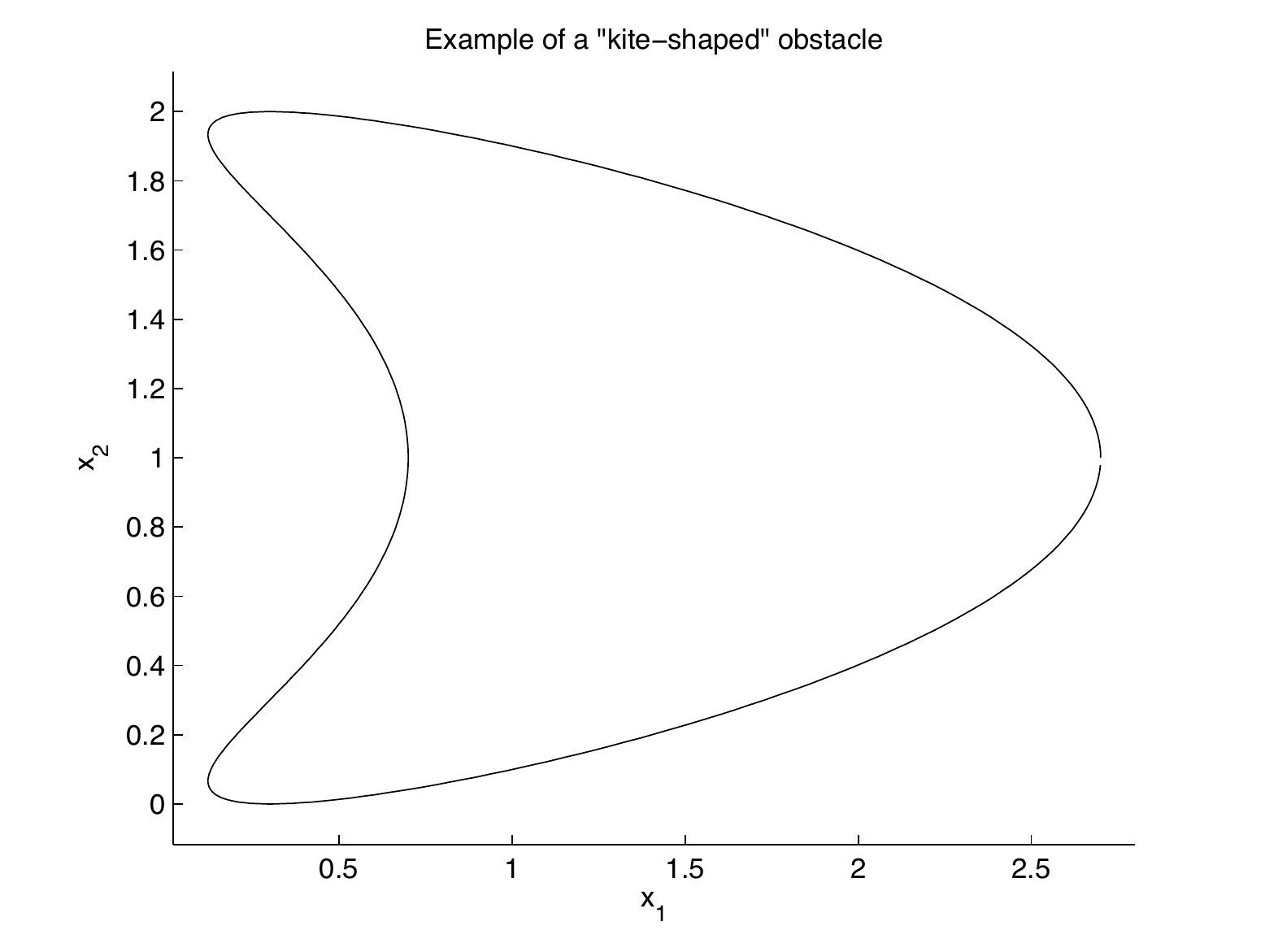}
\label{figEqInt:FEMKite}
}
\subfigure[Configuration]{
\includegraphics[width=7.5cm]{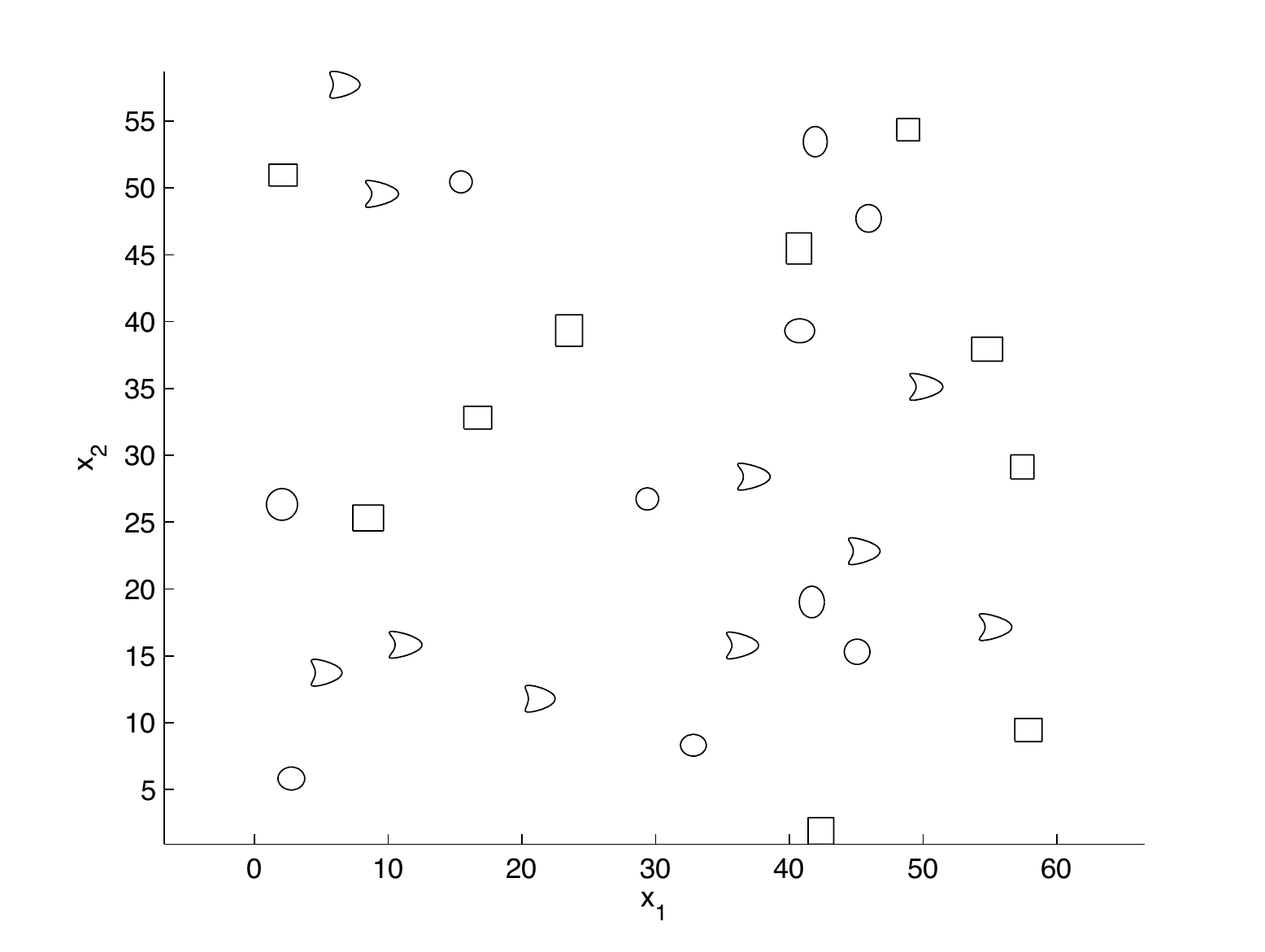}
\label{figEqInt:FEM_obstacles}
}
\subfigure[Eigenvalues of the $4$ preconditioned operators]{
\includegraphics[width=7.5cm]{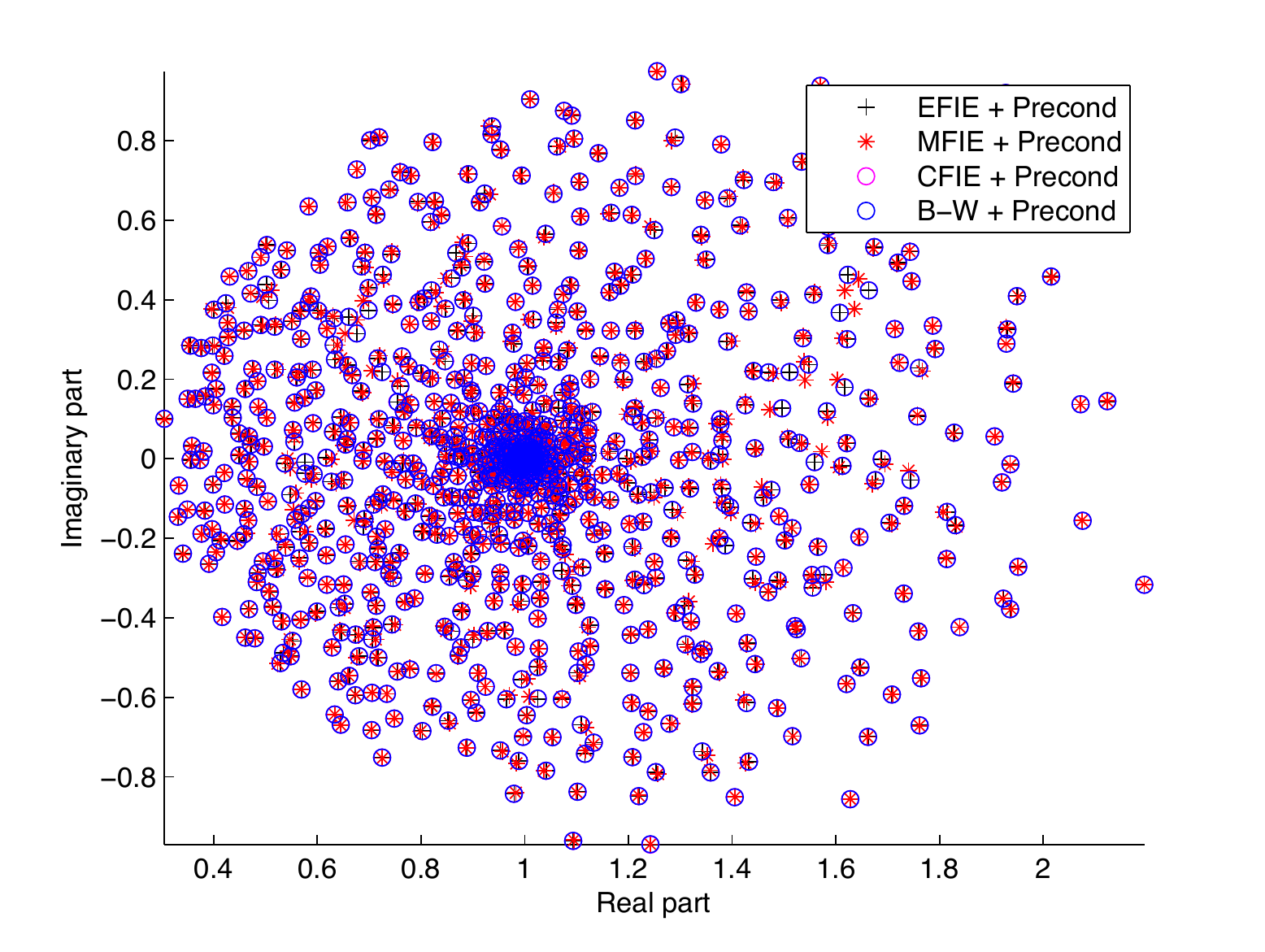}
\label{figEqInt:eigenvalues}
}
\subfigure[Zoom of Figure ({c}) around the point $(1,0)$]{
\includegraphics[width=7.5cm]{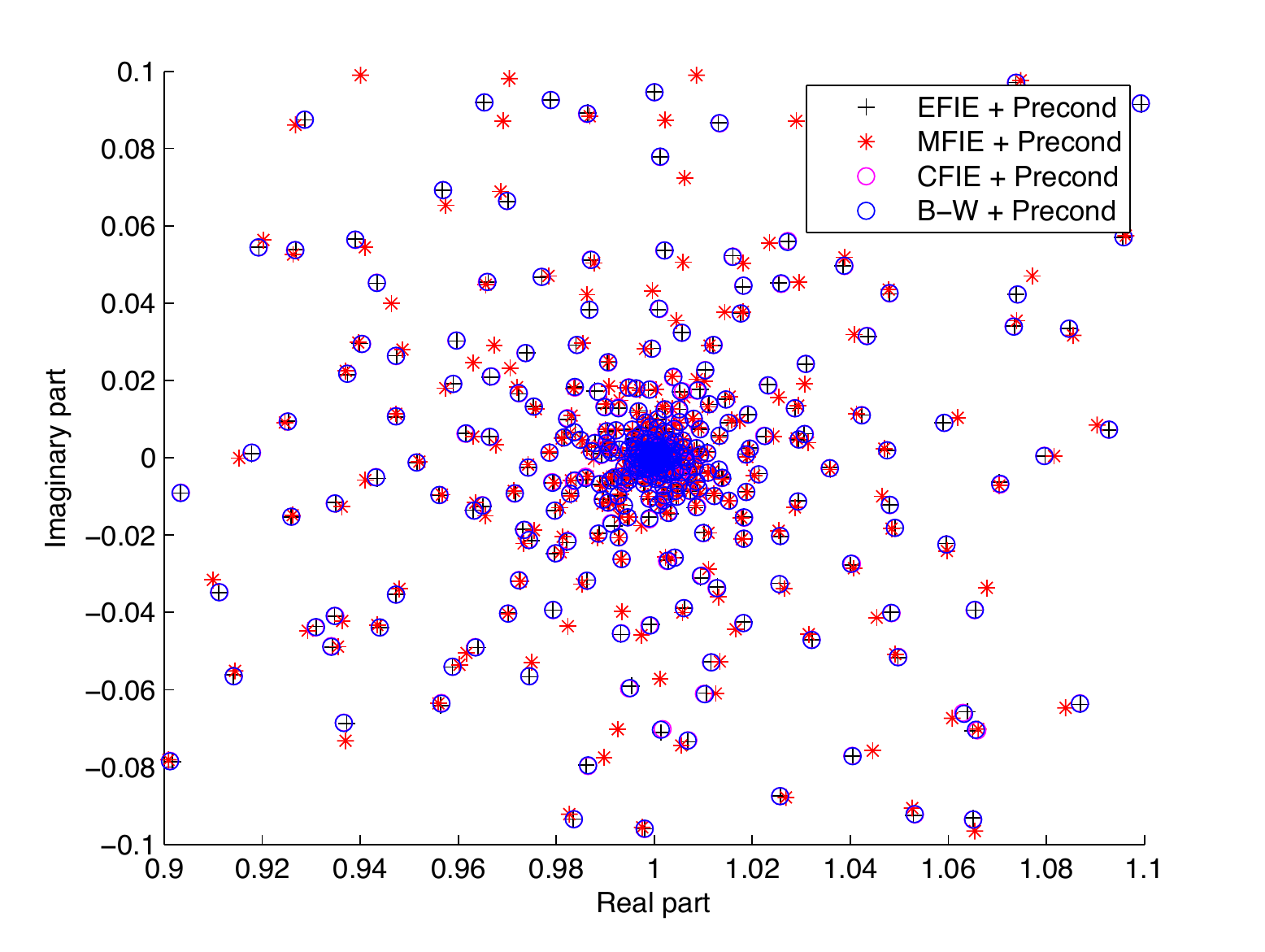}
\label{figEqInt:eigenvalues_zoom}
}
\subfigure[History of convergence of the GMRES($50$,$10^{-6}$)]{
\includegraphics[width=7.5cm]{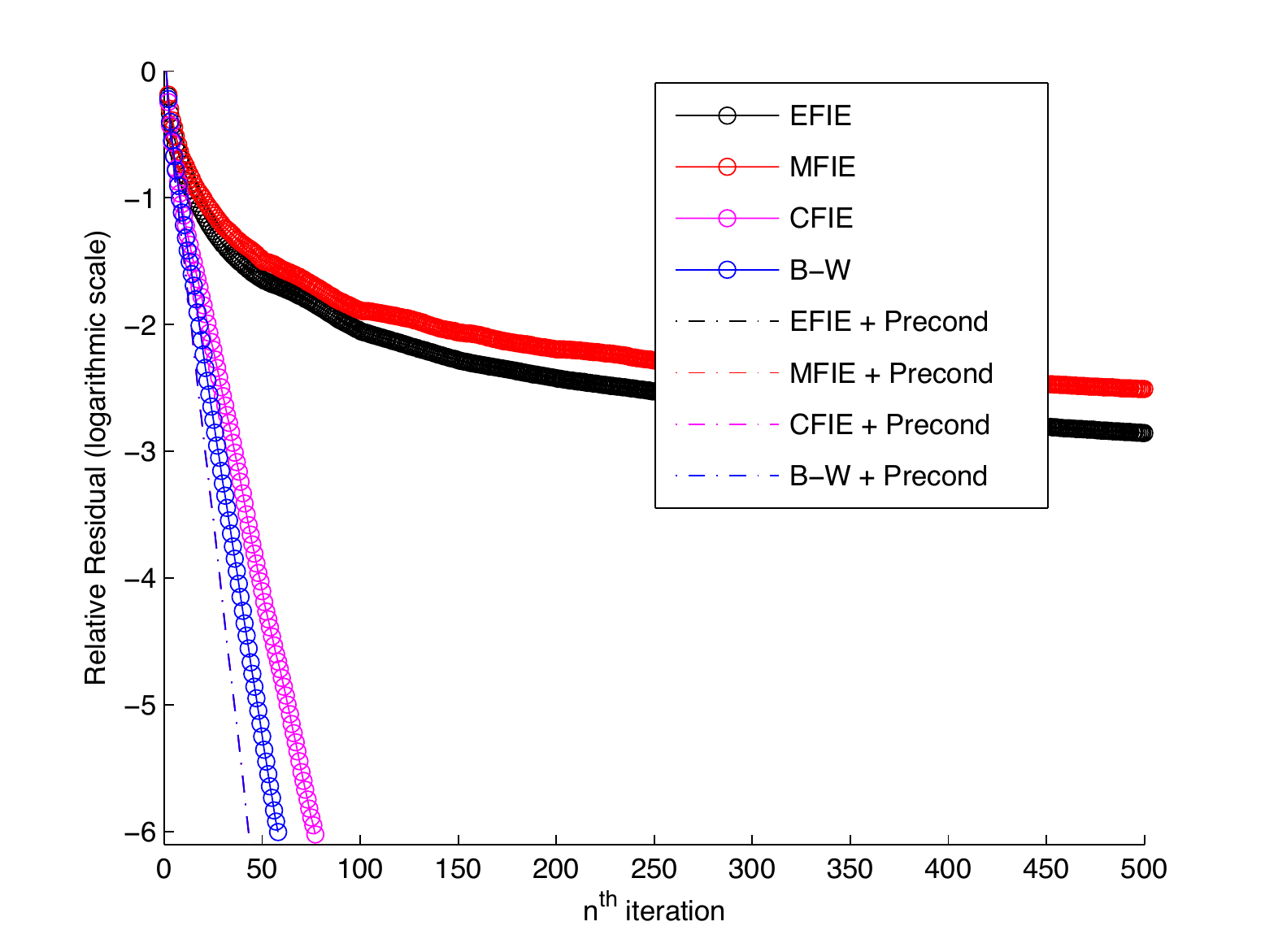}
\label{figEqInt:FEM_historique}
}
\subfigure[Zoom of Figure (e) on the four curves of interest]{
\includegraphics[width=7.5cm]{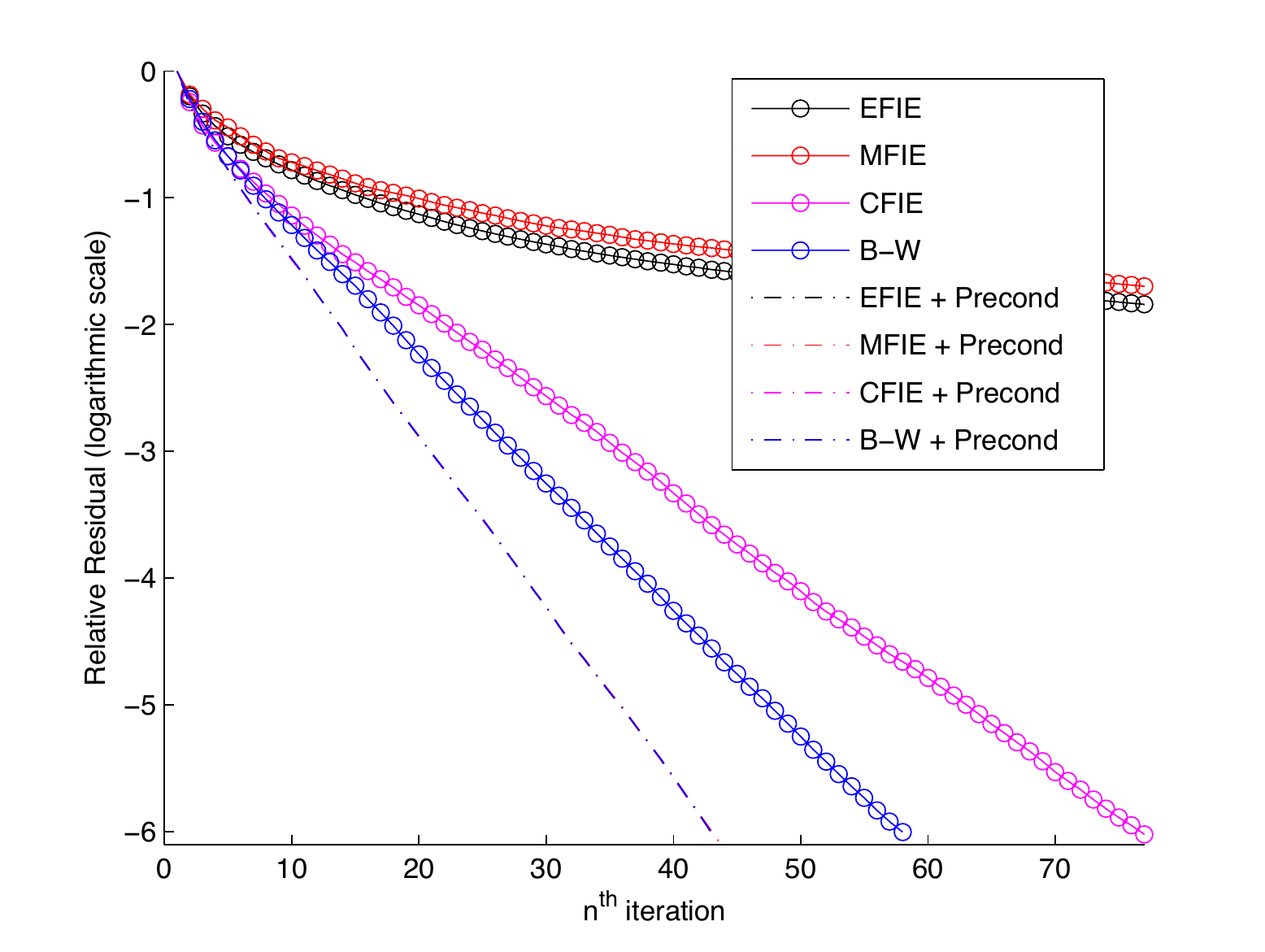}
\label{figEqInt:FEM_historique_zoom}
}
\caption{The ``kite-shaped'' obstacle is presented on figure (a) and picture (b) presents the $M=30$ obstacles with the three possible shapes: ellipses, rectangles and kytes. The scatterers are randomly placed in the box $[0,60]^{2}$, such that the distance $b_{pq}$ between the centers of two obstacles $\Omegamp$ and $\Omegamq$ satisfies $b_{pq}\geq 3$. figure ({c}) shows the (numerical) eigenvalues of the four preconditioned operators and figure (d) is a zoom around the point $(1,0)$.
Finally, Figures (e) and (f) show the history of convergence of the GMRES($50$,$10^{-6}$) for the four integral equations and their preconditioned version (represented by ``+ Precond''). It appears that the four curves of the preconditioned integral equations are superimposed.}
\label{figEqInt:FEMConvergence}
\end{figure}

\section{Conclusion}

This paper deals with boundary integral equation preconditioning for the multiple scattering problem. Two mains results were established, which can be summarized as follows. After being preconditioned by their single scattering operator, firstly, every direct integral equations lead to the exact same equation, and secondly, the indirect integral equation of Brakhage-Werner becomes 
similar to the direct integral equations.
In particular, applying the single scattering preconditioner to whichever integral formulation leads to the exact same convergence rate of the Krylov subspaces solver.

To conclude this article, two brief remarks can be done. First, the above results could probably be extended to the Maxwell's equations. Second, it should be kept in mind that the single scattering preconditioner involves the inversion of the $M$ diagonal blocks of the boundary integral operator. Numerically, these last operations strongly depend on the considered integral formulation and moreover, at high frequency, they become costly and must be handled by iterative methods. On the other hand, the multiple scattering problem is also treated by an ``outer'' Krylov subspace solver. Hence, this will lead to launch $M$ inner Krylov solvers at each iteration of the outer solver.

\section*{Acknowledgement}

The author would like to express his sincere gratitude to the referee, X. Antoine and K. Ramdani for their helpful comments and suggestions.


\bibliography{Bibliographie}
\bibliographystyle{plain}

\end{document}